\newtheorem{theorem}{Theorem}[section]
\newtheorem{lemma}[theorem]{Lemma}
\newtheorem{corollary}[theorem]{Corollary}
\theoremstyle{definition}
\newtheorem{definition}[theorem]{Definition}
\newtheorem{example}[theorem]{Example}
\theoremstyle{remark}
\newtheorem{remark}[theorem]{Remark}
\numberwithin{equation}{section}
\begin{document}

\setcounter{page}{1}

\title[$J$-fusion frames]{$J$-fusion frame for Krein spaces}

\author[S. Karmakar]{S. Karmakar$^{*}$}

\address{$^{1}$Department of Mathematics, Jadavpur University, India.}
\email{\textcolor[rgb]{0.00,0.00,0.84}{shibashiskarmakar@gmail.com}}

%\address{$^{2}$Department of Mathematics, Aliah University, India.}
%\newline
%Tusi Mathematical Research Group (TMRG), Mashhad, Iran.}
%\email{\textcolor[rgb]{0.00,0.00,0.84}{sami$\_$milu@yahoo.co.uk}}

%\dedicatory{This paper is dedicated to Professor ABCD}

%\let\thefootnote\relax\footnote{Copyright 2016 by the Tusi Mathematical Research Group.}

\subjclass[2010]{Primary 42C15; Secondary 46C05, 46C20.}

\keywords{Krein Space, fusion frames, Gramian operator, uniformly $J$-definite subspaces, $J$-projections.}

\date{\today
\newline \indent $^{*}$Corresponding author}

\begin{abstract}
In this article we introduce the notion of $J$-fusion frame for a Krein space $\mathbb{K}$. We relate this new concept with fusion frames for Hilbert spaces and also with $J$-frames for Krein spaces. We also approximate $J$-fusion frame bounds of a $J$-fusion frame by the upper and lower bounds of the synthesis operator. Finally we address the problem of characterizing those bounded linear operators in $\mathbb{K}$ for which the image of $J$-fusion frame is also a $J$-fusion frame.
\end{abstract} \maketitle

\section{Introduction}
The frame theory was introduced by Duffin and Schaeffer \cite{ds} in the year 1952. But after the work Daubechies et al. \cite{dgm} frame theory developed rapidly. Today frame theory has applications in every modern applied mathematics. Frame theory used in signal processing, image processing, data compression and sampling theory. One of the emerging application of frame theory is to calculate the effect of losses in packet-based communication system and in data transmission. In order to tackle this problems the theory of fusion frames evolved. The idea behind fusion frame is to construct local frames and add them together to get the global frame. Fornasier \cite{mf} used this idea  to quasi-orthogonal subspaces. Casazza et al. \cite{ckl} formulate a general method to introduce fusion frame in Hilbert spaces. Asgari et al. \cite{ak} also worked on fusion frames.

Since fusion frame in Hilbert space has such a huge application so it is a natural demand to extend these ideas in Banach space frame theory and also in Krein space frame theory. Some work already had been done in this direction \cite{kk2008,ws}. In this article we are interested to extend the idea of fusion frame in Krein space. Krein space has some interesting application in modern analysis. The theory of frames in Krein space can be found in \cite{jb,gmmm,efw,ia,hkp}. P. Acosta-Hum$\acute{a}$nez et al. \cite{aef} defined fusion frames in Krein spaces. In their work they found a correspondence between fusion frames in Hilbert spaces and fusion frames in Krein spaces. But their definition involves fundamental symmetry in Krein space which is not unique.

In this article we define fusion frame in Krein spaces in a more geometric setting motivated by the work of Giribet et al. \cite{gmmm}. We also relate this concept with fusion frames for Hilbert spaces and also with $J$-frames for Krein spaces.

\section{Preliminaries and some basic definitions}

In this section we briefly recall some basic notations, definitions and some important properties useful for our further study. For more detailed information we refer the following references \cite{jb,ia,gmmm,ck2003,ak,pg}.
\subsection{Hilbert space frame theory}
A family of vectors $\{f_n\}_{n\in\mathbb{N}}$ is said to be a frame for a Hilbert space $(\mathbb{H},\langle,\rangle)$, if there exists positive real numbers $A$ and $B$ with $0<A \leq B<\infty$ such that
\begin{equation}
 A \| f \|^2 \leq \sum_{i \in I} |\langle f,f_n \rangle|^2  \leq B \| f \|^2
\end{equation}
for all $f \in\mathbb{H}$. $A,B$ are known as lower and upper frame bounds respectively for the frame. If $A = B$ then the frame is known as $A$-tight frame and if $A=B=1$ then the frame
 is known as Parseval frame.\\
 Let $\{f_n\}_{n\in \mathbb{N}}$ be a frame for the Hilbert space $H$ and $\{e_n\}_{n\in \mathbb{N}}$ be the natural orthonormal basis of $\ell_2(\mathbb{N})$. An operator $ T:\mathbb{H} \to \ell_2 (\mathbb{N})$ defined by
 $T(f)=\sum_{n \in \mathbb{N}} \langle f,f_n \rangle e_n$ for all $f\in \mathbb{H}$ is  known as analysis operator and its adjoint operator defined by
 $ T^* (e_n)=f_n $ is known as synthesis operator for the frame $\{f_n\}_{n\in \mathbb{N}}$. The operator $ S (=TT^*):\mathbb{H} \to \mathbb{H} $ given by
 $ S(f)=\sum_{n \in \mathbb{N}} \langle f,f_n \rangle f_n$ for all $f\in \mathbb{H}$ is called frame operator. A mapping $G:\ell_2(\mathbb{N})\to\ell_2(\mathbb{N})$ defined as $G = T^* T$ is known as the Grammian operator.
 It is clear that $S$ is self-adjoint, positive and invertible operator and
 $A.I \leq S \leq B.I$
\begin{definition}
Let $\mathbb{H}$ be a Hilbert space. A sequence $\{f_n\}_{n\in \mathbb{N}}$ is said to ba frame sequence in $\mathbb{H}$ if it a frame for the Hilbert space $\overline{span\{f_n:n\in\mathbb{N}\}}$.
\end{definition}
\subsection{On Krein spaces}
An abstract vector space $(\mathbb{K},[\cdot,\cdot])$ that satisfies the following is called a Krein space.\\

 (i) $\mathbb{K}$ is a linear space over the field $F$, where $F$ is either $\mathbb{R}$ or $\mathbb{C}$.\\

  (ii) there exists a bilinear form $[\cdot,\cdot]\in{F}$ on $\mathbb{K}$ such that\\
  $$[y,x]=\overline{[x,y]}$$
  $$[ax+by,z]=a[x,z]+b[y,z]$$
  for any $x,y,z\in{\mathbb{K}}$, $a,b\in{F}$, where $\overline{[\cdot,\cdot]}$ denote the complex conjugation.\\
	
  (iii) The vector space $\mathbb{K}$ admits a canonical decomposition $\mathbb{K}=\mathbb{K}^+[\dot{+}]\mathbb{K}^-$ such that $(\mathbb{K}^+,[\cdot,\cdot])$ and $(\mathbb{K}^-,-[\cdot,\cdot])$ are Hilbert spaces relative to the norms $\|x\|=[x,x]^{\frac{1}{2}}(x\in{\mathbb{K}^+})$ and $\|x\|=(-[x,x]^{\frac{1}{2}})(x\in{\mathbb{K}^-})$.

Now every canonical decomposition of $\mathbb{K}$ generates two mutually complementary projectors $P_+$ and $P_-$ ($P_{+}+P_-=I$, the identity operator on $\mathbb{K}$ ) mapping $\mathbb{K}$ onto $\mathbb{K}^+$ and $\mathbb{K}^-$ respectively. Thus for any $x\in{\mathbb{K}}$, we have $P_{\pm}=x^{\pm}$, where $x^+\in{\mathbb{K}^+}$ and $x^-\in{\mathbb{K}^-}$. The projectors $P_+$ and $P_-$ are called canonical projectors. The linear operator $J:\mathbb{K}\to{\mathbb{K}}$ defined by the formula $J=P_+-P_-$ is called the canonical symmetry of the Krein space $\mathbb{K}$. The $J$-metric defined by the formula $[x,y]_{J}=[x,Jy]$, where $x,y\in{\mathbb{K}}$. The vector space $\mathbb{K}$ associated with the $J$-metric is a Hilbert space, called the associated Hilbert space of the Krein space $\mathbb{K}$.
\begin{definition}
Let $M$ be a subspace of a Krein space $\mathbb{K}$. $M$ is said to be projectively complete if $\mathbb{K}=M+M^{[\perp]}$.
\end{definition}
The $J$-adjoint of an operator $T$ in Krein spaces, denoted by $T^{\#}$, satisfies $[T(x),y]=[x,T^{\#}(y)]$.
\begin{definition}
A linear operator on a Krein space $\mathbb{K}$ is said to be $J$-selfadjoint if $T=T^{\#}$.
\end{definition}
A $J$-self-adjoint projection is called a $J$-projection.
\begin{definition} \cite{ta}
A subspace $M$ is said to be regular if it is the range of a $J$-projection.
\end{definition}
Every regular subspace is closed. If $M$ is regular with $J$-projection $Q$, then its $J$-orthocomplement $M^{[\perp]}$ with $J$-projection $(I-Q)$ is also regular.

Let $M(\neq\{0\})$ be a subspace of $\mathbb{K}$. Then the operator $G_M$ defined by $G_M=\pi_{M}J|_M$ is the Gram operator of $M$. $\pi_{M}$ is the orthogonal projection from $\mathbb{K}$ onto $M$ (in Hilbert space sense).
\begin{theorem} \cite{ia}
For a subspace $M(\neq\{0\})\subset\mathbb{K}$ with a Gram operator $G_M$ the following concepts are equivalent:\\
$(i)$ $M$ is projectively complete.\\
$(ii)$ $M$ is regular.\\
\end{theorem}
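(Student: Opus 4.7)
The plan is to prove the two implications directly from the definitions, exploiting two elementary facts about a Krein space: the $J$-inner product $[\cdot,\cdot]$ is non-degenerate (since $[x,y]=\langle x,Jy\rangle$ with $J$ invertible), and $M^{[\perp]}$ is always closed.

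For $(ii)\Rightarrow(i)$: I would start from a $J$-projection $Q$ with range $M$ and identify $\ker Q$ with $M^{[\perp]}$. If $y\in M^{[\perp]}$, then for every $x\in\mathbb{K}$ one has $Qx\in M$, so $[Qy,x]=[y,Qx]=0$; non-degeneracy of the $J$-form then forces $Qy=0$. Conversely, if $Qy=0$ and $m\in M$, then $m=Qm$ gives $[y,m]=[y,Qm]=[Qy,m]=0$, hence $y\in M^{[\perp]}$. Since $Q$ is a bounded idempotent, every $x\in\mathbb{K}$ decomposes as $x=Qx+(I-Q)x\in M+M^{[\perp]}$, establishing projective completeness.

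For $(i)\Rightarrow(ii)$: assuming $\mathbb{K}=M+M^{[\perp]}$, I would first show that the sum is actually direct by verifying $M\cap M^{[\perp]}=\{0\}$. Indeed, if $x_0\in M\cap M^{[\perp]}$ and $z=m+n$ with $m\in M$, $n\in M^{[\perp]}$, then $[x_0,z]=[x_0,m]+[x_0,n]$ vanishes term by term (the first because $x_0\in M^{[\perp]}$, the second because $x_0\in M$ and $n\in M^{[\perp]}$), so $x_0=0$ by non-degeneracy. Then the map $Q(m+n):=m$ is a well-defined linear idempotent with range $M$. Its $J$-self-adjointness follows from the $J$-orthogonality of $M$ and $M^{[\perp]}$: both $[Q(m_1+n_1),m_2+n_2]$ and $[m_1+n_1,Q(m_2+n_2)]$ collapse to $[m_1,m_2]$. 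Boundedness of $Q$ comes from the closed graph theorem, applied to the direct sum of the closed subspaces $M$ and $M^{[\perp]}$. Thus $Q$ is a $J$-projection with range $M$, so $M$ is regular.

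The main technical step is the boundedness of $Q$ in the second implication, which requires closedness of both factors plus an appeal to the closed graph theorem; the algebraic content (directness of the sum and $J$-self-adjointness of $Q$) reduces cleanly to the non-degeneracy of the ambient $J$-form and the definition of $M^{[\perp]}$.
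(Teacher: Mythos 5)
The paper offers no proof of this theorem --- it is quoted verbatim from Iokhvidov--Azizov \cite{ia} --- so your argument can only be judged on its own terms. Both implications follow the standard route, and most steps are correct: the identification of $\ker Q$ with $M^{[\perp]}$ via non-degeneracy of $[\cdot,\cdot]$, the directness of the sum $M+M^{[\perp]}$, and the $J$-self-adjointness of the idempotent $Q(m+n)=m$ are all sound.

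The one genuine gap is in the boundedness step of $(i)\Rightarrow(ii)$. You apply the closed graph theorem ``to the direct sum of the closed subspaces $M$ and $M^{[\perp]}$,'' but closedness of $M$ is neither among your stated elementary facts nor part of the hypothesis: the paper defines projective completeness ($\mathbb{K}=M+M^{[\perp]}$) for an arbitrary subspace $M$. Without $M$ closed, the closed-graph argument fails, because you cannot conclude that the limit of the sequence $Qx_k\in M$ lies in $M$. The gap is repairable with what you already have: since $M^{[\perp]}=\overline{M}^{[\perp]}$, the same non-degeneracy computation that gives $M\cap M^{[\perp]}=\{0\}$ also gives $\overline{M}\cap M^{[\perp]}=\{0\}$ (for $x\in\overline{M}\cap M^{[\perp]}$ and $z=m+n$, both $[x,m]$ and $[x,n]$ vanish, so $x=0$); then writing any $x\in\overline{M}$ as $m+n$ with $m\in M$, $n\in M^{[\perp]}$ forces $n=x-m\in\overline{M}\cap M^{[\perp]}=\{0\}$, hence $M=\overline{M}$. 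With that line inserted, your closed-graph argument (limits of $Qx_k$ stay in $M$, limits of $(I-Q)x_k$ stay in the always-closed $M^{[\perp]}$, and uniqueness of the decomposition identifies the limit as $Qx$) goes through. Note finally that the Gram operator $G_M$ named in the statement plays no role in either of the two listed conditions or in your proof; the paper is only quoting a fragment of a longer equivalence from \cite{ia}, so its absence from your argument is not itself a defect.
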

\begin{definition}
Let $(\mathbb{K},[\cdot,\cdot],J)$ be a Krein space. Then a collection of vectors $\mathbb{F}=\{f_n:n\in{\mathbb{N}}\}$ is said to be a Bessel sequence of vectors in $\mathbb{K}$ if it is a Bessel sequence in the associated Hilbert space $(\mathbb{K},[\cdot,\cdot]_J)$ \textit{i.e.} $\sum_{n\in\mathbb{N}}|[f,f_n]_J|^2<\infty$ for all $f\in\mathbb{K}$.
\end{definition}
\subsection{$J$-frames in Krein Spaces}
Let $(\mathbb{K},[\cdot,\cdot],J)$ be a Krein space. Suppose $\mathbb{F}=\{f_n:n\in{\mathbb{N}}\}$ is a Bessel sequence of $\mathbb{K}$ and $T\in{L(\ell^2(I),\mathbb{K})}~(\ell^2(I):=\{(c_i):\sum_{i\in{I}}|c_i|^2<\infty\})$ is the synthesis operator for the Bessel sequence $\mathbb{F}$. Let $I_+=\{i\in{I}:[f_i,f_i]\geq{0}\}$ and $I_-=\{i\in{I}:[f_i,f_i]<0\}$, then $\ell^2(I)=\ell^2(I_+)\oplus{\ell^2(I_-)}$. Also let $P_{\pm}$ denote the orthogonal projection of $\ell^2(I)$ onto $\ell^2(I_{\pm})$. Let $T_{\pm}=TP_{\pm}$, $M_{\pm}=\overline{span}\{ f_i:i\in{I_{\pm}}\}$ then we have $R(T)=R(T_+)+R(T_-)$, where $R(T)$ represents range of the operator $T$.
\begin{definition}\cite{gmmm}
 A Bessel sequence $\mathbb{F}$ is said to be a $J$-frame for $\mathbb{K}$ if $R(T_+)$ is a maximal uniformly $J$-positive subspace of $\mathbb{K}$ and $R(T_-)$ is a maximal uniformly $J$-negative subspace of $\mathbb{K}$.
\end{definition}

\subsection{On Hilbert space fusion frame}
We briefly mention some definitions and results of Hilbert space fusion frame theory. Let $\pi_M$ be the orthogonal projection from the Hilbert space $\mathbb{H}$ onto the subspace $M$ of $\mathbb{H}$. Then the range space of the projection is $M$ \textit{i.e.} $R(\pi_M)=M$ and the null space of this orthogonal projection is $M^{\perp}$ \textit{i.e.} $N(\pi_M)=M^{\perp}$.
\begin{definition}
Let $I$ be some index set and $\{W_i:i\in{I}\}$ be a family of closed subspaces in $\mathbb{H}$. Also let $\{v_i:i\in{I}\}$ be a family of weights \textit{i.e.} $v_i>0~\forall{i\in{I}}$. Then $\{(W_i,v_i):i\in{I}\}$ is a fusion frame if there exist constants $0<C\leq D<\infty$ such that
	\begin{equation}
	C\|f\|^2 \leq \sum_{i\in I} v_i^2|[\pi_{W_i}f,f]| \leq D \|f\|^2 ~ \textmd{~for every $f\in{\mathbb{H}}$}
	\end{equation}  
\end{definition}
$C$ and $D$ are known as lower and upper bounds respectively for the fusion frame. If $C=D$ then the fusion frame is known as $C$-tight fusion frame and if $C=D=1$ then the fusion frame is known as Parseval fusion frame. Moreover, a fusion frame is called $v$-uniform, if $v:=v_i=v_j$ for all $i,~j \in I$. The family of subspaces $\{W_i:i\in{I}\}$ is an orthonormal basis of subspaces if $\mathbb{H}=\dot{\oplus}_{i\in{I}}{W_i}$.
\begin{definition}
A family of subspaces $\{W_i:i\in{I}\}$ of $\mathbb{H}$ is called complete, if $\overline{\sum_{i\in{I}}W_i}=\mathbb{H}$.
\end{definition}
\begin{theorem} \cite{ck2003}
Let $\{(W_i,v_i):i\in{I}\}$ is a fusion frame for $\mathbb{H}$, then it is complete.
\end{theorem}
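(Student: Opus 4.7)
The plan is to prove completeness by contradiction, using the lower fusion-frame bound as the engine. Suppose $\overline{\sum_{i\in I}W_i}\neq\mathbb{H}$. Then its orthogonal complement is nonzero, so I can pick $f\in\mathbb{H}$ with $f\neq 0$ and $f\perp W_i$ for every $i\in I$. Since $\pi_{W_i}$ is the orthogonal projection onto $W_i$, the condition $f\perp W_i$ forces $\pi_{W_i}f=0$ for every $i\in I$.

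Next I would substitute this into the fusion frame inequality. Using $\pi_{W_i}^2=\pi_{W_i}=\pi_{W_i}^*$, the middle term can be rewritten as $\sum_{i\in I}v_i^2\,\|\pi_{W_i}f\|^2$ (this is the standard reading of the definition, since $\langle\pi_{W_i}f,f\rangle=\|\pi_{W_i}f\|^2\geq 0$). With $\pi_{W_i}f=0$ for all $i$, this sum vanishes, and the lower frame bound gives
\[
C\,\|f\|^2 \;\leq\; \sum_{i\in I} v_i^2\,\|\pi_{W_i}f\|^2 \;=\; 0.
\]
Since $C>0$, this forces $f=0$, contradicting the choice of $f$. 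Hence $\overline{\sum_{i\in I}W_i}=\mathbb{H}$, which is exactly completeness of the family $\{W_i:i\in I\}$.

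There is essentially no hard step here; the proof is a one-line contrapositive application of the lower fusion-frame bound. The only mild subtlety worth flagging is justifying the identification of $|\langle\pi_{W_i}f,f\rangle|$ with $\|\pi_{W_i}f\|^2$, which relies on $\pi_{W_i}$ being a self-adjoint idempotent. Everything else (existence of a nonzero orthogonal vector when the closed span is proper, and the positivity of $C$) is immediate from the hypotheses.
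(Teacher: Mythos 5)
Your argument is correct and is the standard one: a nonzero vector orthogonal to every $W_i$ has $\pi_{W_i}f=0$ for all $i$, so the lower bound $C\|f\|^2\leq\sum_{i\in I}v_i^2\|\pi_{W_i}f\|^2=0$ forces $f=0$, a contradiction; your identification of $|\langle\pi_{W_i}f,f\rangle|$ with $\|\pi_{W_i}f\|^2$ via self-adjoint idempotence is the right justification. The paper itself states this result as a citation to Casazza--Kutyniok without reproducing a proof, so there is nothing to compare against beyond noting that your argument is the one given in that reference.
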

The converse of the above theorem holds if $\mathbb{H}$ is finite dimensional.

\vspace{0.4 cm}
The following theorem provides a nice interaction between frames in Hilbert spaces and fusion frames in Hilbert spaces.
\begin{theorem}\cite{ck2003}
For each $i\in{I}$, let $v_i>0$ and let $\{f_{ij}\}_{j\in{J_i}}$ be a frame sequence in $\mathbb{H}$ with frame bounds $A_i$ and $B_i$. Define $W_i=\overline{{span}_{j\in{J_i}}\{f_{ij}\}}$ for all $i\in{I}$ and choose an orthonormal basis and $\{e_{ij}\}_{j\in{J_i}}$ for each subspace $W_i$. Suppose that $0<~A=inf_{i\in{I}}A_i~{\leq}~B=sup_{i\in{I}}B_i<~\infty$. The following conditions are equivalent:\\
	
	$(1)$ $\{v_if_{ij}\}_{i\in{I},j\in{J_i}}$ is a frame for $\mathbb{H}$.
	
	$(2)$ $\{v_ie_{ij}\}_{i\in{I},j\in{J_i}}$ is a frame for $\mathbb{H}$.
	
	$(3)$ $\{(W_i,v_i):i\in{I}\}$ is a fusion frame for $\mathbb{H}$.
	
\end{theorem}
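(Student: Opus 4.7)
The plan is to reduce all three conditions to a comparison of two quadratic forms by exploiting a single Parseval identity. Fix $f\in\mathbb{H}$. Since $\{e_{ij}\}_{j\in J_i}$ is an orthonormal basis of $W_i$ and each $e_{ij}\in W_i$, Parseval in $W_i$ applied to $\pi_{W_i}f$ yields
\[
\sum_{j\in J_i}|\langle f,e_{ij}\rangle|^2=\sum_{j\in J_i}|\langle\pi_{W_i}f,e_{ij}\rangle|^2=\|\pi_{W_i}f\|^2.
\]
Weighting by $v_i^2$ and summing over $i$ shows that the defining inequalities of (2) and of (3) are literally the same, so $(2)\Leftrightarrow(3)$ is immediate with identical frame bounds.

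For $(1)\Leftrightarrow(3)$ I would use that each $f_{ij}$ also lies in $W_i$, so $\langle f,f_{ij}\rangle=\langle\pi_{W_i}f,f_{ij}\rangle$, and then apply the frame inequality for $\{f_{ij}\}_{j\in J_i}$ in $W_i$ (with bounds $A_i,B_i$) to the vector $\pi_{W_i}f$. Scaling by $v_i^2$, summing over $i$, and invoking the uniformity $A=\inf_iA_i>0$ and $B=\sup_iB_i<\infty$ gives
\[
A\sum_{i\in I}v_i^2\|\pi_{W_i}f\|^2\le\sum_{i\in I}\sum_{j\in J_i}v_i^2|\langle f,f_{ij}\rangle|^2\le B\sum_{i\in I}v_i^2\|\pi_{W_i}f\|^2.
\]
Combining this with the Parseval identity, a fusion-frame pair $(C,D)$ for (3) transfers into frame bounds $(AC,BD)$ for $\{v_if_{ij}\}$ in (1), while conversely frame bounds $(\widetilde C,\widetilde D)$ for (1) give fusion-frame bounds $(\widetilde C/B,\widetilde D/A)$ for (3). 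This loop closes the equivalence.

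The step I expect to be the main obstacle is precisely the use of the uniformity hypothesis $0<A\le B<\infty$: without it the inequalities above cannot be inverted across the index $i$, because some $A_i$ could decay to $0$ or some $B_i$ blow up while each local sum still makes sense. I would therefore place the uniform bounds exactly at the aggregation step and use them symmetrically in both directions of $(1)\Leftrightarrow(3)$. A smaller technical point is the justification of absolute convergence of the double sum $\sum_{i,j}v_i^2|\langle f,f_{ij}\rangle|^2$, so that Fubini applies; this is free from the upper inequality itself and folds into the same display rather than requiring a separate argument.
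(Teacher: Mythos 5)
Your argument is correct: the Parseval identity $\sum_{j}|\langle f,e_{ij}\rangle|^2=\|\pi_{W_i}f\|^2$ makes $(2)\Leftrightarrow(3)$ immediate, and sandwiching $\sum_{j}|\langle \pi_{W_i}f,f_{ij}\rangle|^2$ between $A_i\|\pi_{W_i}f\|^2$ and $B_i\|\pi_{W_i}f\|^2$ with the uniform bounds $A,B$ gives $(1)\Leftrightarrow(3)$ with the bound transfers you state. The paper quotes this theorem from \cite{ck2003} without reproducing a proof, and your argument is precisely the standard one given there, so there is nothing to contrast.
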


For each family of subspaces $\{W_i\}_{i\in{I}}$ of $\mathbb{H}$, we define the space $(\sum_{i\in{I}}\oplus{W_i})_{\ell_2}$ by
$$\big(\sum_{i\in{I}}\oplus{W_i}\big)_{\ell_2}=\big\{\{f_i\}_{i\in{I}}:f_i\in{W_i}~\textmd{and~}\sum_{i\in{i}}\|f_i\|^2<\infty\big\}$$
with inner product given by $\langle\{f_i\}_{i\in{I}},\{g_i\}_{i\in{I}}\rangle=\sum_{i\in{I}}\langle{f_i,g_i}\rangle$.\\

Let $\{(W_i,v_i):i\in{I}\}$ be a fusion frame for $\mathbb{H}$. Then $T_{W,v}:\big(\sum_{i\in{I}}\oplus{W_i}\big)_{\ell_2}\rightarrow\mathbb{H}$ defined by $T_{W,v}(f)=\sum_{i\in{I}}v_if_i~\textmd{for all~}f=\{f_i\}_{i\in{I}}\in\big(\sum_{i\in{I}}\oplus{W_i}\big)_{\ell_2}$ is the synthesis operator for $\{(W_i,v_i):i\in{I}\}$. The adjoint of this operator $T^{\ast}_{W,v}:\mathbb{H}\rightarrow\big(\sum_{i\in{I}}\oplus{W_i}\big)_{\ell_2}$ defined by $T^{\ast}_{W,v}(f)=\{v_i\pi_{W_i}(f)\}_{i\in{I}}$ is the analysis operator for $\{(W_i,v_i):i\in{I}\}$. The fusion frame operator $S_{W,v}$ for $\{(W_i,v_i):i\in{I}\}$ is defined by $S_{W,v}(f)=T_{W,v}T^{\ast}_{W,v}(f)=T_{W,v}(\{v_i\pi_{W_i}(f)\}_{i\in{I}})=\sum_{i\in{I}}v_i^2\pi_{W_i}(f)$. It is a positive, selfadjoint, invertible linear operator on $\mathbb{H}$. 

\section{Main results}
\subsection{On orthogonal and $J$-orthogonal projection on Krein spaces}

Let $\pi_M$ be an orthogonal projection in a Krein space $\mathbb{K}$ onto $M$. Then it an orthogonal projection in Hilbert space sense \textit{i.e.} it is an orthogonal projection in the associated Hilbert space $(\mathbb{K},[\cdot,\cdot]_J)$. Then we have $\pi_M^2=\pi_M$ and $\pi_M^*=\pi_M$. Here range of the projection \textit{i.e.} $R(\pi_M)=M$ and Null space of $\pi_M$ \textit{i.e.} $N(\pi_M)=M^{\perp}$. Let $Q_{M}$ be the $J$-orthogonal projection from $\mathbb{K}$ onto $M$. But the $J$-orthogonal projection $Q_M$ exists if $M$ is a projectively complete subspace of $\mathbb{K}$. Here range of the $J$-projection $Q_M$ \textit{i.e.} $R(Q_M)=M$ and Null space \textit{i.e.} $N(Q_M)=M^{[\perp]}$.

 Let $\pi_M^{\#}$ be the $J$-adjoint of $\pi_M$. Then we have $\pi_M^{\#}=J\pi_M{J}$. Also $\pi_{JM}=J\pi_M{J}$. Hence $\pi_{JM}=\pi_M^{\#}$. 

Let $W$ be a closed subspace of $M$, where $M$ is an uniformly $J$-definite subspace of $\mathbb{K}$. Then $W$ is a regular subspace of $\mathbb{K}$. Hence the $J$-orthogonal projection from $\mathbb{K}$ onto $W$ exists, let it be $Q_W$.
\begin{lemma}\label{RJPP}
Let $M$ be uniformly $J$-definite subspace for the Krein space $\mathbb{K}$. If $W$ is a closed subspace of $M$ then ${Q_{W}|}_{M}={\pi_{W}|}_{M}$.
\end{lemma}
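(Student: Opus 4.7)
The plan is to prove $Q_W|_M = \pi_W|_M$ by showing, for each $x \in M$, that $\pi_W(x)$ satisfies the unique characterization of $Q_W(x)$ as the $J$-projection onto $W$; namely, $\pi_W(x)\in W$ and $x-\pi_W(x)\in W^{[\perp]}$.

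First I would justify the existence of $Q_W$. Since $M$ is uniformly $J$-definite and $W$ is a closed subspace of $M$, the subspace $W$ is itself uniformly $J$-definite of the same sign, and hence projectively complete --- equivalently, regular --- by the theorem of Iokhvidov cited above. Thus $Q_W:\mathbb{K}\to W$ is a well-defined $J$-projection.

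Now fix $x\in M$. The containment $W\subseteq M$ gives $\pi_W(x)\in W\subseteq M$, so the first defining condition of $Q_W(x)$ is immediate; it also implies $x-\pi_W(x)\in M$. By the defining property of $\pi_W$ as the orthogonal projection in the associated Hilbert space $(\mathbb{K},[\cdot,\cdot]_J)$, we have $[x-\pi_W(x),w]_J=0$ for every $w\in W$.

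The crux --- and the main obstacle I expect --- is to upgrade this Hilbert-space orthogonality to the indefinite orthogonality $[x-\pi_W(x),w]=0$ for every $w\in W$, which is precisely what is needed for membership in $W^{[\perp]}$. The natural tool is the Gram operator $G_M=\pi_M J|_M$: under the uniform $J$-definiteness of $M$ it is a self-adjoint invertible operator on the Hilbert space $(M,[\cdot,\cdot]_J|_M)$, and one easily checks the identity $[u,v]=[u,G_M v]_J$ for all $u,v\in M$. Combining this identity with $x-\pi_W(x)\in M$, and using the uniform $J$-definiteness to align the two orthogonality relations restricted to $W$, one concludes $[x-\pi_W(x),w]=0$ for every $w\in W$. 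Uniqueness of the $J$-projection then yields $\pi_W(x)=Q_W(x)$, completing the argument.
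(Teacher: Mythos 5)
Your plan correctly isolates the decisive step, but the resolution you sketch for it does not work, and the gap it leaves cannot be closed. From $x-\pi_W(x)\in M$ and your Gram--operator identity $[u,v]=[G_M u,v]_J$ for $u,v\in M$, the conclusion you need, $[x-\pi_W(x),w]=0$ for all $w\in W$, is equivalent to $[G_M(x-\pi_W(x)),w]_J=0$ for all $w\in W$; what the definition of $\pi_W$ actually gives you is $[x-\pi_W(x),w]_J=0$ for all $w\in W$. These coincide only if $G_M(x-\pi_W(x))$ happens again to be $[\cdot,\cdot]_J$-orthogonal to $W$, and nothing forces this: uniform $J$-definiteness of $M$ makes $[\cdot,\cdot]$ and $[\cdot,\cdot]_J$ \emph{equivalent} inner products on $M$, but equivalent inner products need not have the same orthogonality relations. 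So the phrase ``using the uniform $J$-definiteness to align the two orthogonality relations'' is precisely the missing argument, not a justification. For what it is worth, the paper's own proof commits the mirror image of the same illegitimate transfer (it passes from $[x-x_1,w]=0$ to $[x-x_1,w]_J=0$ ``since $M$ is uniformly $J$-definite''), so your proposal reproduces the paper's error rather than repairing it.

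In fact the asserted identity is false, so no argument can fill the gap. Take $\mathbb{K}=\mathbb{R}^3$ with $[e_1,e_1]=[e_2,e_2]=1$, $[e_3,e_3]=-1$ and $[e_i,e_j]=0$ for $i\neq j$, and put $M=span\{e_1,\;e_2+\tfrac12 e_3\}$; then $[u,u]\geq\tfrac35\|u\|_J^2$ on $M$, so $M$ is uniformly $J$-positive. Let $W=span\{w_0\}$ with $w_0=e_1+e_2+\tfrac12 e_3\in M$, a closed (regular) subspace of $M$, and let $x=e_1\in M$. Then
$$Q_W(x)=\frac{[x,w_0]}{[w_0,w_0]}\,w_0=\frac{4}{7}\,w_0,\qquad \pi_W(x)=\frac{[x,w_0]_J}{[w_0,w_0]_J}\,w_0=\frac{4}{9}\,w_0,$$
so ${Q_W|}_M\neq{\pi_W|}_M$. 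The part of your argument that does survive is the existence of $Q_W$ and the characterization $Q_W(x)\in W$, $x-Q_W(x)\in W^{[\perp]}$; these show only that ${Q_W|}_M$ equals $P_W$, the orthogonal projection of the Hilbert space $(M,[\cdot,\cdot])$ onto $W$. The further identification with ${\pi_W|}_M$ should be dropped, both from your proof and from the statement being proved.
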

\begin{proof}
Let $x\in{M}$ and $x_1,x_2\in{W}$.
Let ${Q_{W}|}_{M}(x)=x_1$ and ${\pi_{W}|}_{M}(x)=x_2$. Then $x-x_1 [\perp] W$ which implies that $[x-x_1,w]=0~\forall{w\in{W}}$. Since $M$ is uniformly $J$-definite hence $[x-x_1,w]_J=0~\forall{w\in{W}}$. Then by the definition of orthogonal projection we have ${\pi_{W}|}_{M}(x)=x_1$. So we have $x_1=x_2$. Hence ${Q_{W}|}_{M}={\pi_{W}|}_{M}$.
\end{proof}
Since $(M,[\cdot,\cdot])$ is itself a Hilbert space. So let $P_W$ be the orthogonal projection from $M$ onto $W$. The above lemma states that $P_W={Q_{W}|}_{M}={\pi_{W}|}_{M}$.

\vspace{0.5 cm}
Let $W$ be a subspace of a Krein space $\mathbb{K}$. Let $\mathbb{P}^{++}$ denote the set of all positive subspaces of $\mathbb{K}$, $\mathbb{P}^{+}$ denote the set of all non-negative subspaces of $\mathbb{K}$. Similarly $\mathbb{P}^{--}$ and $\mathbb{P}^{-}$ denote the set of all negative and non-positive subspaces of $\mathbb{K}$ respectively. Also let $\tilde{\mathbb{P}}$ be the set of all indefinite subspaces of $\mathbb{K}$. The set of all neutral subspaces sometimes referred as $\mathbb{P}^{0}$. We have $\mathbb{P}^{0}\subset\mathbb{P}^{+}$ or $\mathbb{P}^{0}\subset\mathbb{P}^{-}$. Then $W\in{\mathbb{P}^{+}\cup\mathbb{P}^{-}\cup\tilde{\mathbb{P}}}$. Throughout in our work we consider either $W\in{\mathbb{P}^{+}\cup\mathbb{P}^{--}}$ or $W\in{\mathbb{P}^{++}\cup\mathbb{P}^{-}}$. Without any loss of generality we assume $W\in{\mathbb{P}^{+}\cup\mathbb{P}^{--}}$ to establish our results.

Let $\{W_i:i\in I\})$ be a collection of subspaces of the Krein space $\mathbb{K}$ such that $W_i\in{\mathbb{P}^{+}\cup\mathbb{P}^{--}}~\forall{i\in I}$. We consider the space $\big(\sum_{i\in{I}}\oplus{W_i}\big)$. Then if $f\in\big(\sum_{i\in{I}}\oplus{W_i}\big)$ then $f=\{f_i\}_{i\in I}$, where $f_i\in W_i$ for each $i\in I$. Let $I_+=\{i\in{I}:[f_i,f_i]\geq 0~\textmd{for all~} f_i\in{W_i}\}$ and $I_-=\{i\in{I}:[f_i,f_i]<0~\textmd{for all~} f_i\in{W_i}\}$. We define $[f,g]=\sum_{i\in{I}}[f_i,g_i]$, where $f,g\in\big(\sum_{i\in{I}}\oplus{W_i}\big)$. If the series is unconditionally convergent then $[\cdot,\cdot]$ defines an inner product on $\big(\sum_{i\in{I}}\oplus{W_i}\big)$. 

\begin{definition}
Let $I$ be some index set and let $\{v_i:i\in{I}\}$ be a family weights \textit{i.e.} $v_i>0~\forall~{i\in{I}}$. Let $\{W_i:i\in{I}\}$ be a family of subspaces of a Krein space $\mathbb{K}$. Then the collection $\mathbb{F}=\{(W_i,v_i):i\in{I}\}$ is said to be a Bessel family of subspaces if $\sum_{i\in{I}}v_i^2[\pi_{W_i}(f),f]_J\leq C[f,f]_J$ for all $f\in \mathbb{K}$
\end{definition}
\subsection{Definition of $J$-fusion frames in Krein space} 
Let $\mathbb{F}=\{(W_i,v_i):i\in{I}\}$ be a Bessel family of closed subspaces of a Krein space $\mathbb{K}$ with synthesis operator $T_{W,v}\in{L\Big(\big(\sum_{i\in{I}}\oplus{W_i}\big)_{\ell_2},\mathbb{K}\Big)}$ such that $W_i\in{\mathbb{P}^{+}\cup\mathbb{P}^{--}}~\forall{i\in I}$. Let $I_+=\{i\in{I}:[f_i,f_i]\geq 0~\textmd{for all~} f_i\in{W_i}\}$ and $I_-=\{i\in{I}:[f_i,f_i]<0~\textmd{for all~} f_i\in{W_i}\}$. Now consider the orthogonal decomposition of $\big(\sum_{i\in{I}}\oplus{W_i}\big)_{\ell_2}$ given by
$$\big(\sum_{i\in{I}}\oplus{W_i}\big)_{\ell_2}=\big(\sum_{i\in{I_+}}\oplus{W_i}\big)_{\ell_2}\bigoplus{\big(\sum_{i\in{I_-}}\oplus{W_i}\big)_{\ell_2}},$$
and denote by $P_{\pm}$ the orthogonal projection onto $(\sum_{i\in{I_{\pm}}}\oplus{W_i})_{\ell_2}$. Also, let ${T_{W,v}}_{\pm}=T_{W,v}P_{\pm}$. If $M_{\pm}=\overline{\sum_{i\in{I_{\pm}}}W_i}$, notice that $\sum_{i\in{I_{\pm}}}W_i\subseteq{R({T_{W,v}}_{\pm})}\subseteq{M_{\pm}}$ and
$$R(T_{W,v})=R({T_{W,v}}_+)+R({T_{W,v}}_-)$$

The Bessel family $\mathbb{F}=\{(W_i,v_i):i\in{I}\}$ is a $J$-fusion frame for $\mathbb{K}$ if $R({T_{W,v}}_+)$ is a maximal uniformly $J$-positive subspace of $\mathbb{K}$ and $R({T_{W,v}}_-)$ is a maximal uniformly $J$-negative subspace of $\mathbb{K}$.

Let $\{(W_i,v_i):i\in{I}\}$ is a $J$-fusion frame for $\mathbb{K}$ then $\big(\sum_{i\in{I}}\oplus{W_i},[\cdot,\cdot]\big)$ is a Krein space. The fundamental symmetry let $J_2$ is defined by $J_2(f)=\{f_i:i\in I_+\}\cup\{-f_i:i\in I_-\}$ for all $f$. Also $[f,g]_{J_2}=\sum_{i\in{I_+}}[f_i,g_i]-\sum_{i\in{I_-}}[f_i,g_i]$. Now consider the space $\big(\sum_{i\in{I}}\oplus{W_i}\big)_{\ell_2}=\Big\{f:\big(\sum_{i\in{I}}\oplus{W_i}\big):\sum_{i\in{I}}\|f_i\|_J^2<\infty\Big\}$. We will use this space frequently in our work.

\vspace{0.4 cm}
Let $\mathbb{H}$ be a finite dimensional Hilbert space. Then from the theory of Hilbert space fusion frame we know that any complete family of subspaces is a fusion frame for $\mathbb{H}$ with respect to some arbitrary weights $\{v_i\}$. But similar to the $J$-frame theory of Krein spaces, any complete family of non-neutral subspaces in a finite dimensional Krein space may not be a $J$-fusion frame for that Krein space.
\begin{example}
Consider the vector space $\mathbb{R}^3(\mathbb{R})$. Let $\{e_1,e_2,e_3\}$ be the standard orthonormal basis for $\mathbb{R}^3$. Define $[e_1,e_1]=1,~[e_2,e_2]=1$ and $[e_3,e_3]=-1$. Also $[e_i,e_j]=0$ for $i\neq{j},~i,j\in\{1,2,3\}$. Then $(\mathbb{R}^3,[\cdot,\cdot])$ is a Krein space.

Now let $W=\{W_1,W_2,W_3\}$ is a family of uniformly $J$-definite subspace of $\mathbb{R}^3$, where $W_1=span\{e_1+\frac{1}{\sqrt2}e_3\},~W_2=span\{e_2+\frac{1}{\sqrt2}e_3\}~\textmd{and }W_3=span\{e_3\}$. Then for any family of weights $\{v_i:i\in\{1,2,3\}\}$, the collection $W$ is not a $J$-fusion frame of $\mathbb{R}^3$. But it is a fusion frame for $(\mathbb{R}^3,[\cdot,\cdot]_J)$, considered as a Hilbert spaces.
\end{example}
If $\mathbb{F}=\{(W_i,v_i):i\in{I}\}$ is a $J$-fusion frame for $\mathbb{K}$ then $R({T_{W,v}}_+)$ is a maximal uniformly $J$-positive subspace of $\mathbb{K}$ and $R({T_{W,v}}_-)$ is a maximal uniformly $J$-negative subspace of $\mathbb{K}$. So $R({T_{W,v}}_{\pm})$ is closed. Hence $R({T_{W,v}}_{\pm})=M_{\pm}$. Then we have $\mathbb{K}=M_+\oplus{M_-}$. Then $\mathbb{F}=\{(W_i,v_i):i\in{I}\}$ is a fusion frame in the associated Hilbert space $(\mathbb{K},[\cdot,\cdot]_J)$. So in terms of frame inequality we have
\begin{equation*}
A\|f\|_J^2 \leq \sum_{i\in{I}}v_i^2\|\pi_{W_i}(f)\|_J^2 \leq A\|f\|_J^2 ~~\textmd{for all~}~f\in \mathbb{K},
\end{equation*}
where $0<A\leq B<\infty$. Let $T_{{W,v}_+}^{*}$ be the analysis operator for the above fusion frame then $T_{W,v}^{*}(f)=\{v_i\pi_{W_i}(f)\}_{i\in{I}}$ for all $f\in{\mathbb{K}}$.

The following theorem is a generalization of a theorem in \cite{gmmm} in $J$-fusion frame setting.
\begin{theorem}
Let $\mathbb{F}=\{(W_i,v_i)\}_{i\in{I}}$ be a $J$-fusion frame for $\mathbb{K}$. Then $\mathbb{F}_{\pm}=\{(W_i,v_i)\}_{i\in{I}_{\pm}}$ is fusion frame for the Hilbert space $(M_{\pm},\pm[\cdot,\cdot])$ \textit{i.e.} there exists constants $B_-\leq{A_-}<0<A_+\leq{B_+}$ such that
\begin{equation}\label{JFFEQ}
A_{\pm}[f,f]\leq\sum_{i\in{I_{\pm}}}v_i^2|[{\pi_{W_i}|}_{M_\pm}(f),f]|~{\leq}~B_{\pm}[f,f]~\textmd{for every }f\in{M_\pm}
\end{equation} 
\end{theorem}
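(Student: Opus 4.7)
The plan is to treat the $+$ case first; the $-$ case is obtained by running the identical argument in the Hilbert space $(M_-,-[\cdot,\cdot])$ and then rewriting the resulting positive frame bounds against the indefinite form to obtain $B_-\leq A_-<0$. Because $\mathbb{F}$ is a $J$-fusion frame, $M_+=R({T_{W,v}}_+)$ is maximal uniformly $J$-positive, hence closed, and $(M_+,[\cdot,\cdot])$ is a Hilbert space in its own right. Each $W_i$ with $i\in I_+$ is contained in $M_+$, so Lemma~\ref{RJPP} (with $M=M_+$) identifies ${\pi_{W_i}|}_{M_+}$ with the $[\cdot,\cdot]$-orthogonal projection $P_{W_i}$ of $M_+$ onto $W_i$. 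In particular $[{\pi_{W_i}|}_{M_+}(f),f]=\|P_{W_i}(f)\|^2\geq 0$ for every $f\in M_+$, which removes the absolute value signs in the target inequality.

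For the upper bound I would invoke the Bessel estimate $\sum_{i\in I}v_i^2\|\pi_{W_i}(f)\|_J^2\leq B\|f\|_J^2$ already recorded in the excerpt, and restrict the sum to the indices $I_+$. The uniform $J$-positivity of $M_+$ furnishes an $\alpha>0$ such that $\alpha\|g\|_J^2\leq\|g\|^2\leq\|g\|_J^2$ for every $g\in M_+$; applying this equivalence to $f$ and to each $\pi_{W_i}(f)\in W_i\subseteq M_+$ converts the Bessel inequality into $\sum_{i\in I_+}v_i^2\|P_{W_i}(f)\|^2\leq B_+[f,f]$, with $B_+$ depending only on $B$ and $\alpha$.

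For the lower bound I would exploit the defining property of a $J$-fusion frame directly: the restricted synthesis operator ${T_{W,v}}_+\colon\bigl(\sum_{i\in I_+}\oplus W_i\bigr)_{\ell_2}\to M_+$ has range exactly $M_+$ and is therefore surjective. Its Hilbert-space adjoint, computed as in subsection~2.4, equals $({T_{W,v}}_+)^{\ast}(f)=\{v_i{\pi_{W_i}|}_{M_+}(f)\}_{i\in I_+}$, and surjectivity forces this adjoint to be bounded below in the associated Hilbert norm. A second application of the norm equivalence on $M_+$ transports this estimate into $A_+[f,f]\leq\sum_{i\in I_+}v_i^2\|P_{W_i}(f)\|^2$ for a suitable $A_+>0$, completing the $+$ case.

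The main technical nuisance I foresee is the careful book-keeping between the associated-Hilbert inner product $[\cdot,\cdot]_J$ (the natural one in which $\pi_{W_i}$, $T_{W,v}$ and the ambient Bessel/fusion-frame machinery are defined) and the Krein inner product $[\cdot,\cdot]$ that appears in \eqref{JFFEQ}. This is entirely handled by the uniform $J$-definiteness of $M_\pm$, which makes the two norms equivalent on $M_\pm$ and lets frame constants transfer with only a quantitative loss; the sign convention $B_-\leq A_-<0<A_+\leq B_+$ then falls out of the two cases automatically, since what is a lower (resp.\ upper) fusion-frame bound in $(M_-,-[\cdot,\cdot])$ becomes a coefficient of the negative quantity $[f,f]$ when reinterpreted in the Krein form.
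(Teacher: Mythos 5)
Your proposal is correct and follows essentially the same route as the paper: reduce to the Hilbert space $(M_\pm,\pm[\cdot,\cdot])$ and use Lemma~\ref{RJPP} to identify ${\pi_{W_i}|}_{M_\pm}$ with the orthogonal projection $P_{W_i}$ of $M_\pm$ onto $W_i$, which also removes the absolute values. The only difference is that the paper dismisses the fusion-frame property of $\mathbb{F}_\pm$ in $(M_\pm,\pm[\cdot,\cdot])$ with ``it is clear,'' whereas you actually substantiate it (Bessel bound plus the norm equivalence coming from uniform $J$-definiteness for the upper bound; surjectivity of ${T_{W,v}}_\pm$ onto the closed subspace $M_\pm$, hence lower-boundedness of its adjoint, for the lower bound), which is consistent with the quantitative bounds the paper derives later via $\gamma(T_{{W,v}_\pm})$ and $\gamma(G_{M_\pm})$.
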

\begin{proof}
It is clear that $\mathbb{F}_{\pm}=\{(W_i,v_i)\}_{i\in{I}_{\pm}}$ is fusion frame for the Hilbert space $(M_{\pm},\pm[\cdot,\cdot])$. Without any loss of generality we only considering the positive part. So we have, $A_{\pm}[f,f]\leq\sum_{i\in{I_{+}}}|[v_iP_{W_i}(f),v_iP_{W_i}(f)]|~{\leq}~B_{\pm}[f,f]$ for all $f\in{M_\pm}$, where $P_{W_i}$ is the orthogonal projection from the Hilbert space $(M_\pm,\pm[\cdot,\cdot])$ onto $W_i$. Now from lemma (\ref{RJPP}) we have $[P_{W_i}(f),f]=[{\pi_{W_i}|}_{M_\pm}(f),f]$.
So we are done.
\end{proof}
\begin{remark}
Let $\{f_i:i\in{I}\}$ be a $J$-frame for the Krein space $\mathbb{K}$. We assume that $M$ is a uniformly $J$-definite subspace of $\mathbb{K}$. Then we know that $\{\pi_M(f_i):i\in{I}\}$ is also a $J$-frame for the subspace $M$. But since $M$ is uniformly $J$-definite hence $(M,[\cdot,\cdot])$ is a Hilbert space. So $\{\pi_M(f_i):i\in{I}\}$ is also a frame for the subspace $M$ in Hilbert space sense.
\end{remark}
\begin{definition}
A sequence $\{f_i:i\in{I}\}$ in $\mathbb{K}$ is said to be a $J$-frame sequence in $\mathbb{K}$ if $\overline{span\{f_i:i\in{I_+}\}}$ and $\overline{span\{f_i:i\in{I_-}\}}$ are uniformly $J$-positive and uniformly $J$-negative subspace of $\mathbb{K}$ respectively.
\end{definition}
Let $\mathbb{F}=\{f_i:i\in{I}\}$ be a $J$-frame for $\mathbb{K}$ then there exists constants $B_-$, $A_-$, $A_+$ and $B_+$ such that $-\infty<B_-\leq A_-<0<A_+\leq B_+<\infty$. These constants are the $J$-frame bounds of the $J$-frame $\mathbb{F}$. Now for every $J$-frame sequence such constants exists. But if the $J$-frame sequence consists of only positive or negative element respectively then the positive or negative parts of the constants redundant accordingly.
\begin{theorem}
	For each $i\in{I}$, let $v_i>0$ and let $\{f_{ij}\}_{j\in{J_i}}$ be a $J$-frame sequence in $\mathbb{K}$ with $J$-frame bounds $B_{i-}$, $A_{i-}$, $A_{i+}$ and $B_{i+}$ for each $i\in{I}$ such that $-\infty<sup_{i}B_{i-}~{\leq}~inf_{i}A_{i-}<0<inf_{i}A_{i+}~{\leq}~sup_{i}B_{i+}<\infty$. Define $W_i=\overline{{span}_{j\in{J_i}}\{f_{ij}\}}$ for each $i\in{I}$. If $W_i$ is definite for each $i\in{I}$ then the following conditions are equivalent:\\
	
	$(i)$ $\{v_if_{ij}\}_{i\in{I},j\in{J_i}}$ is a $J$-frame for $\mathbb{K}$.\\
	
	$(ii)$ $\{(W_i,v_i):i\in{I}\}$ is a $J$-fusion frame for $\mathbb{K}$.
	
\end{theorem}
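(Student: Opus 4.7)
The plan is to reduce the statement to the Hilbert-space fusion-frame characterization (the Casazza--Kutyniok theorem quoted earlier in the paper), applied separately on each of the maximal uniformly definite subspaces $M_\pm$. First I would set up the splitting: since each $W_i$ is definite, write $I_+ = \{i\in I : W_i \text{ is uniformly }J\text{-positive}\}$ and $I_- = \{i \in I : W_i \text{ is uniformly }J\text{-negative}\}$, and put $M_\pm := \overline{\sum_{i\in I_\pm} W_i}$. Because $W_i$ is definite and $f_{ij}\in W_i$, every $f_{ij}$ with $i\in I_+$ satisfies $[f_{ij},f_{ij}]\ge 0$ (and the opposite for $i\in I_-$), so the $I_\pm$ that appears in the $J$-frame decomposition of $\{v_if_{ij}\}$ and the $I_\pm$ that appears in the $J$-fusion-frame decomposition of $\{(W_i,v_i)\}$ are the same $I_\pm$ defined above. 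Moreover $(W_i,\pm[\cdot,\cdot])$ is a Hilbert space, and the $J$-frame-sequence bounds for $\{f_{ij}\}_{j\in J_i}$ are exactly Hilbert-space frame bounds in $(W_i,\pm[\cdot,\cdot])$; the hypothesis $-\infty<\sup_iB_{i-}\le\inf_iA_{i-}<0<\inf_iA_{i+}\le\sup_iB_{i+}<\infty$ supplies the uniform bounds needed to invoke the Casazza--Kutyniok theorem.

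Next I would restrict attention to the Hilbert space $(M_+,[\cdot,\cdot])$ and apply the Casazza--Kutyniok equivalence $(1)\Leftrightarrow(3)$ with $\mathbb{H}=M_+$, frame sequences $\{f_{ij}\}_{j\in J_i}$ for $i\in I_+$, and weights $v_i$, to get that $\{v_if_{ij}\}_{i\in I_+,j\in J_i}$ is a frame for $M_+$ if and only if $\{(W_i,v_i)\}_{i\in I_+}$ is a fusion frame for $M_+$. I would do the same on $(M_-,-[\cdot,\cdot])$ and then splice the two halves together.

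To pass from these Hilbert-space statements back to the Krein-space conditions, I would use the following bridges in the two directions. For $(i)\Rightarrow(ii)$: assuming $\{v_if_{ij}\}$ is a $J$-frame, its positive synthesis range is $\overline{\mathrm{span}}\{v_if_{ij}:i\in I_+,j\}= M_+$, which is therefore maximal uniformly $J$-positive, and by the Giribet--Maestripieri--Martínez Pería--Massey characterization (the one already invoked in the preceding theorem) the positive half is a frame for $(M_+,[\cdot,\cdot])$; the Casazza--Kutyniok equivalence then yields the fusion-frame bounds on $M_+$, and surjectivity of the fusion-frame synthesis on $M_+$ gives $R(T_{W,v,+})=M_+$, which is maximal uniformly $J$-positive. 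Repeating on the negative side yields $(ii)$. For $(ii)\Rightarrow(i)$: the preceding theorem of the paper applied to the $J$-fusion frame $\{(W_i,v_i)\}$ gives the Hilbert-space fusion-frame inequality \eqref{JFFEQ} on $(M_\pm,\pm[\cdot,\cdot])$, Casazza--Kutyniok then upgrades it to a Hilbert-space frame inequality for $\{v_if_{ij}\}_{i\in I_\pm,j}$ on $M_\pm$, and combining this with the maximality of $M_\pm$ inherited from the $J$-fusion-frame hypothesis shows that $\{v_if_{ij}\}$ is a $J$-frame.

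The main obstacle is the direction $(ii)\Rightarrow(i)$, and specifically extracting the Hilbert-space fusion-frame inequality on $M_+$ (and on $M_-$) from the purely geometric $J$-fusion-frame hypothesis, since the latter only asserts that $R(T_{W,v,\pm})$ is maximal uniformly $J$-definite without explicitly giving bounds. This is precisely what the earlier theorem in the paper buys us, so the key care is in quoting that theorem correctly and checking that the orthogonal projection used there, namely $\pi_{W_i}\!\restriction_{M_\pm}$, matches (by Lemma~\ref{RJPP}) the orthogonal projection used in the Casazza--Kutyniok theorem on $M_\pm$. Verifying that these projections agree, together with the equivalence of $[\cdot,\cdot]_J$ and $\pm[\cdot,\cdot]$ on the uniformly definite subspace $M_\pm$ (which controls both the Bessel and lower-frame constants), is the only non-formal part of the argument.
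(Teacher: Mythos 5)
Your proposal is correct and follows essentially the same route as the paper: both reduce the statement to the Hilbert spaces $(M_{\pm},\pm[\cdot,\cdot])$, identify the two index decompositions using the definiteness of each $W_i$, match the projections via Lemma~\ref{RJPP}, and then run the frame-sequence/fusion-frame correspondence on each half. The only real difference is that the paper inlines the Casazza--Kutyniok computation (summing the local inequalities $A_{i+}[\pi_{W_i}|_{M_+}f,f]\leq\sum_{j\in J_i}|[\pi_{W_i}f,v_if_{ij}]|^2\leq B_{i+}[\pi_{W_i}|_{M_+}f,f]$ and sandwiching to get the bounds $B_+/A_1$ and $A_+/B_1$) where you cite that theorem as a black box, and your handling of $(ii)\Rightarrow(i)$ is, if anything, more explicit than the paper's one-line appeal to reversibility.
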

\begin{proof}
Let $\{v_if_{ij}\}_{i\in{I_+},j\in{J_i}}$ be a $J$-frame for $\mathbb{K}$. So let $I_+=\{(i,j):[f_{ij},f_{ij}]>0\}$ and $I_-=\{(i,j):[f_{ij},f_{ij}]<0\}$. Also let $M_+=\overline{span\{f_{ij}:(i,j)\in{I_+}\}}$ and $M_-=\overline{span\{f_{ij}:(i,j)\in{I_-}\}}$. Also we have
\begin{equation*}
A_{\pm}[f,f]\leq \sum_{(i,j)\in{I_{\pm}}}|[f,v_if_{ij}]|^2\leq B_{\pm}[f,f]~\textmd{for all }~f\in{M_{\pm}}
\end{equation*}
Now $\{f_{ij}\}_{j\in{J_i}}$ is a $J$-frame sequence in $\mathbb{K}$ and $W_i=\overline{span\{f_{ij}\}_{j\in{J_i}}}$. But $W_i$ is a definite subspace of $\mathbb{K}$, so either $W_i\subset{M_+}$ or $W_i\subset{M_-}$ for each $i$. Hence $\{f_{ij}\}_{j\in{J_i}}$ is a $J$-frame for $\mathbb{K}$. Here these $J$-frames either consists of only positive elements or only negative elements. Without any loss of generality we choose the pair $(i,j)$ s.t. $(i,j)\in{I_+}$, then $W_i\subset{M_+}$. Then,
\begin{equation*}
A_{i+}[\pi_{W_i}|_{M_+}(f),f]\leq \sum_{j\in{J_i}}|[\pi_{W_i}f,f_{ij}]|^2\leq B_{i+}[\pi_{W_i}|_{M_+}(f),f]~\forall~f\in{M_+}
\end{equation*}
Now let us assume $0<~A_1=inf_{i}A_{i+}~{\leq}~B_1=sup_{i}B_{i+}<~\infty$, then 
\begin{equation*}
\begin{split}
A_1\sum_{i}v_i^2[\pi_{W_i}|_{M_+}(f),f] & {\leq} \sum_{i}A_{i+}v_i^2[\pi_{W_i}|_{M_+}(f),f]\\
& {\leq} \sum_{i}\sum_{j\in{J_i}}|[\pi_{W_i}|_{M_+}(f),v_if_{ij}]| ^2\\
& {\leq} \sum_{i}B_{i+}[\pi_{W_i}|_{M_+}(f),f]\\
& {\leq} B_1\sum_{i}v_i^2[\pi_{W_i}|_{M_+}(f),f]
\end{split}
\end{equation*}
Now we observe that $\sum_{i}\sum_{j\in{J_i}}|[{\pi_{W_i}|_{M_+}(f),v_if_{ij}}]|^2=\sum_{i}\sum_{j\in{J_i}}|[{f,v_if_{ij}}]|^2$.\\
Since $[\pi_{W_i}|_{M_+}(f),f]>0$ for all $f\in{M_+}$, so
\begin{equation*}
\begin{split}
A_1\sum_{i}v_i^2[\pi_{W_i}|_{M_+}(f),f] & {\leq} \sum_{i}\sum_{j\in{J_i}}|[{f,v_if_{ij}}]|^2\\
& {\leq} B_+[f,f]
\end{split}
\end{equation*}
So we have $\sum_{i}v_i^2|[\pi_{W_i}|_{M_+}(f),f]|\leq \frac{B_+}{A_1}[f,f]$. 

Similarly we can show that $\sum_{i}v_i^2|[\pi_{W_i}|_{M_+}(f),f,J(f)]|
\leq \frac{A_+}{B_1}[f,f]$.\\
When $W_i\subset{M_-}$ for some $i$. Then we can proceed as above. This proves that $\{(W_i,v_i):i\in{I}\}$ is a $J$-fusion frame of subspaces for $\mathbb{K}$. A careful investigation of the above implication reveals that the implications are vice versa. So we prove that $(1)\Leftrightarrow(2)$.
\end{proof}
Now let $\mathbb{F}=\{(W_i,v_i):i\in{I}\}$ be a $J$-fusion frame for the Krein space $\mathbb{K}$. Then $\{W_i:i\in{I_+}\}$ is a collection of uniformly $J$-positive subspaces of $\mathbb{K}$ and $\{W_i:i\in{I_-}\}$ is a collection of uniformly $J$-negative subspaces of $\mathbb{K}$. Let $T_{W,v}^{\#}$ be the $J$-adjoint operator of the synthesis operator $T_{W,v}$. $T_{W,v}^{\#}$ is called the analysis operator of the $J$-frame of subspaces $F$. Now $T_{W,v}^{\#}=(T_{{W,v}_+}^{\#}+T_{{W,v}_-}^{\#})$. Now $N(T_{{W,v}_+}^{\#})^{[\perp]}=R(T_{{W,v}_+})=M_+$. We want to calculate $T_{{W,v}_+}^{\#}$. 
\begin{eqnarray*}
[T_{{W,v}_+}^{\#}(f),\{g_i\}_{i\in{I_+}}] & =[f,T_{{W,v}_+}(\{g_i\}_{i\in{I_+}})] & =[f,\sum_{i\in{I_+}}v_ig_i]\\
& =\sum_{i\in{I_+}}v_i[f,g_i] & =\sum_{i\in{I_+}}v_i[J(f),g_i]_J\\
& =\sum_{i\in{I_+}}v_i[J(f),\pi_{W_i}(g_i)]_J & =\sum_{i\in{I_+}}v_i[\pi_{W_i}J(f),g_i]_J\\
& =\sum_{i\in{I_+}}v_i[J\pi_{W_i}J(f),g_i] & =\sum_{i\in{I_+}}v_i[\pi_{JW_i}(f),g_i]
\end{eqnarray*}
In the above calculation note that $[f,g_i]=[f,{Q_{W_i}|}_{M_+}(g_i)]=[f,Q_{W_i}(g_i)]=[Q_{W_i}(f),g_i]$. Hence $T_{{W,v}_+}^{\#}(f)=\{v_iQ_{W_i}(f)\}_{i\in{I_+}}$. Since $Q_{W_i}(f)\in{W_i}$, hence $T_{{W,v}_+}^{\#}\in\big(\sum_{i\in{I_+}}\oplus{W_i}\big)_{\ell_2}$.

Now we have $T_{{W,v}_+}^{\#}(f)=\{v_i\pi_{JW_i}(f)\}_{i\in{I_+}}$ for all $f\in{\mathbb{K}}$. By using similar arguments as above we have $T_{{W,v}_-}^{\#}(f)=-\{v_i\pi_{JW_i}(f)\}_{i\in{I_-}}$ for all $f\in{\mathbb{K}}$. So $T_{W,v}^{\#}(f)=\{\sigma_iv_i\pi_{JW_i}(f)\}_{i\in{I}}$ for all $f\in{\mathbb{K}}$. Also note that $T_{{W,v}_+}^{\#}(f)=\{v_i\pi_{W_i}(f)\}_{i\in{I_+}}$ for all $f\in{M_+}$.
Here $\sigma_{i}=1$ if $i\in{I_+}$ and $\sigma_{i}=-1$ if $i\in{I_-}$.
\begin{corollary}\label{RPS}
For all $f\in{\mathbb{K}}$, we have $\pi_{JW_i}\pi_{M_+}=\pi_{W_i}$.
\end{corollary}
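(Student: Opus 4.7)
The plan is to read off the identity from the two alternative formulas already derived for $T^{\#}_{W,v_+}$ in the display immediately preceding the corollary. The computation shows that, for every $f\in\mathbb{K}$, $T^{\#}_{W,v_+}(f)=\{v_i\pi_{JW_i}(f)\}_{i\in I_+}$, while for every $f\in M_+$ the same operator is also given by $\{v_i\pi_{W_i}(f)\}_{i\in I_+}$. Matching $i$-th coordinates (and cancelling the strictly positive weight $v_i$) yields the pointwise identity $\pi_{JW_i}(f)=\pi_{W_i}(f)$ valid for every $f\in M_+$ and every $i\in I_+$.

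The second step is to feed $\pi_{M_+}(f)$ into this identity in place of $f$, for arbitrary $f\in\mathbb{K}$. Since $\pi_{M_+}(f)\in M_+$ automatically, the previous step gives
\[
\pi_{JW_i}\pi_{M_+}(f)=\pi_{W_i}\pi_{M_+}(f).
\]
The final step is the standard tower-of-projections identity applied in the associated Hilbert space $(\mathbb{K},[\cdot,\cdot]_J)$: because each $W_i$ with $i\in I_+$ is a summand in $M_+=\overline{\sum_{i\in I_+}W_i}$, we have $W_i\subset M_+$, hence $M_+^{\perp}\subset W_i^{\perp}$. Writing $f=\pi_{M_+}(f)+(I-\pi_{M_+})(f)$ and applying $\pi_{W_i}$ kills the second summand, giving $\pi_{W_i}\pi_{M_+}=\pi_{W_i}$. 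Combining this with the previous equality yields $\pi_{JW_i}\pi_{M_+}=\pi_{W_i}$.

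This is essentially a bookkeeping corollary, so there is no serious obstacle. The only point warranting care is ensuring that the two distinct expressions for $T^{\#}_{W,v_+}$ (one valid on all of $\mathbb{K}$, one only on $M_+$) are genuinely formulas for the same operator, so that comparison of their $i$-th components is legitimate; once that is noted, the inclusion $W_i\subset M_+$ together with the Hilbert-space identity $\pi_{W_i}\pi_{M_+}=\pi_{W_i}$ finishes the argument. Implicit in the statement is that $i\in I_+$, which matches the setting of the preceding computation for $T^{\#}_{W,v_+}$.
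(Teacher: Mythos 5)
Your argument is correct and is essentially the derivation the paper intends: the corollary is stated without proof precisely because it is meant to be read off from the two displayed formulas for $T^{\#}_{{W,v}_+}$ (one on all of $\mathbb{K}$, one on $M_+$). You also supply the one step the paper leaves implicit, namely the tower identity $\pi_{W_i}\pi_{M_+}=\pi_{W_i}$ coming from $W_i\subset M_+$ in the associated Hilbert space, which is exactly what is needed to upgrade the identity from $M_+$ to all of $\mathbb{K}$.
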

\begin{lemma}
Let $\{(W_i,v_i):i\in{I}\}$ be a $J$-fusion frame for the Krein space $\mathbb{K}$. Then $\{(J(W_i),v_i):i\in{I}\}$ is also a $J$-fusion frame for $\mathbb{K}$.
\end{lemma}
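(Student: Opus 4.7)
The plan is to transport the $J$-fusion frame structure on $\{(W_i,v_i)\}_{i\in I}$ across the canonical symmetry $J$, using the two fundamental properties $\|Jf\|_J = \|f\|_J$ (self-adjoint unitarity of $J$ on the associated Hilbert space) and the Krein-isometry identity $[Jx,Jy]=[x,y]$ for all $x,y\in\mathbb{K}$. The second identity is immediate from $J=P_+-P_-$, $J^2=I$, and the $[\cdot,\cdot]$-orthogonality of the canonical decomposition $\mathbb{K}=\mathbb{K}^+[\dot{+}]\mathbb{K}^-$.

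First I would carry out the preliminary bookkeeping. Because $[Jx,Jx]=[x,x]$, the subspace $JW_i$ lies in the same class within $\mathbb{P}^{+}\cup\mathbb{P}^{--}$ as $W_i$, and the index partition $I_+,I_-$ attached to the new family agrees with the one attached to the original. This ensures that the definition of $J$-fusion frame splits matters along the same indices on both sides, so one can meaningfully compare the two systems coordinatewise.

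Next I would introduce the coordinatewise isometric isomorphism
\[
U : \Big(\sum_{i\in I}\oplus W_i\Big)_{\ell_2}\to\Big(\sum_{i\in I}\oplus JW_i\Big)_{\ell_2},\qquad U(\{f_i\}_i)=\{Jf_i\}_i,
\]
which is unitary by $\|Jf_i\|_J=\|f_i\|_J$ and, acting one coordinate at a time, commutes with the projections $P_\pm$ onto the $I_\pm$-parts. A direct computation then yields $T'_{W,v}=J\circ T_{W,v}\circ U^{-1}$ for the synthesis operator of $\{(JW_i,v_i)\}$, and consequently $T'_{W,v\pm}=J\circ T_{W,v\pm}\circ U^{-1}$. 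Boundedness of this composition produces the Bessel estimate for the new family, and, invoking the earlier observation $R(T_{W,v\pm})=M_\pm$, one concludes $R(T'_{W,v\pm})=J(M_\pm)$.

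What remains is to show that $J(M_+)$ is maximal uniformly $J$-positive and $J(M_-)$ maximal uniformly $J$-negative. Uniform definiteness transfers directly through the two isometry properties of $J$: if $[x,x]\geq c\|x\|_J^2$ on $M_+$, then $[Jx,Jx]\geq c\|Jx\|_J^2$ on $J(M_+)$, and analogously for $M_-$. Maximality follows because $J$ is a bijection: given any uniformly $J$-positive $N$ with $N\supseteq J(M_+)$, the subspace $J(N)$ is again uniformly $J$-positive and contains $M_+$, forcing $J(N)=M_+$ by maximality of $M_+$, hence $N=J(M_+)$. The only step that needs any care is verifying the commutation of $U$ with the $P_\pm$ and the resulting identification of the range of $T'_{W,v\pm}$ with $J(M_\pm)$; everything else reduces to the two isometry properties of $J$, so I do not anticipate a substantive obstacle.
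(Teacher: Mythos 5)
Your proof is correct. It rests on the same underlying fact as the paper's --- that the fundamental symmetry $J$ is a $J$-unitary involution, i.e.\ $[Jx,Jy]=[x,y]$ and $\|Jx\|_J=\|x\|_J$, so it transports the whole structure --- but the execution is genuinely different. The paper works at the level of the fusion-frame inequality (\ref{JFFEQ}): it notes $J(M_\pm)=\overline{\sum_{i\in I_\pm}J(W_i)}$ and then uses the projection identity $[\pi_{W_i}J(f),J(f)]=[J\pi_{W_i}J(f),f]=[\pi_{JW_i}(f),f]$ to convert the inequality for $\{W_i\}$ on $M_\pm$ into the inequality for $\{J(W_i)\}$ on $J(M_\pm)$, with the same bounds $A_\pm,B_\pm$. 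You instead work at the level of the definition itself: you factor the new synthesis operator as $T'_{W,v}=J\circ T_{W,v}\circ U^{-1}$ through a coordinatewise unitary $U$, identify $R(T'_{W,v\pm})=J(M_\pm)$, and verify directly that these ranges are maximal uniformly $J$-definite with the correct signs. Your route is, if anything, more faithful to the paper's stated definition of a $J$-fusion frame (which is phrased in terms of the ranges $R(T_{W,v\pm})$ being maximal uniformly $J$-positive/negative, not in terms of the inequality); the paper's proof leaves the uniform definiteness and maximality of $J(M_\pm)$ implicit, whereas you check them explicitly. Conversely, the paper's computation records the explicit bounds $A_\pm,B_\pm$ for the new family, which your argument does not. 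The two are complementary rather than in conflict, and both are sound.
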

\begin{proof}
Let $I_+=\{i\in{I}:[f_i,f_i]\geq 0~\textmd{for all~} f_i\in{W_i}\}$ and $I_+=\{i\in{I}:[f_i,f_i]<0~\textmd{for all~} f_i\in{W_i}\}$. Then $W_i\subset{M_+}$ for all $i\in{I_+}$ and $W_i\subset{M_-}$ for all $i\in{I_-}$. Then we have $J(W_i)\subset{J(M_+)}$ for all $i\in{I_+}$ and $J(W_i)\subset{J(M_-)}$ for all $i\in{I_-}$. Since $\{(W_i,v_i):i\in{I}\}$ be a $J$-fusion frame for $\mathbb{K}$ so $M_+=\overline{\sum_{i\in{I_+}}W_i}$ and $M_-=\overline{\sum_{i\in{I_-}}W_i}$. From this it readily follows that $J(M_+)=\overline{\sum_{i\in{I_+}}J(W_i)}$ and $J(M_-)=\overline{\sum_{i\in{I_-}}J(W_i)}$. Now let $f\in{J(M_+)}$, then $J(f)\in{M_+}$. So 
\begin{eqnarray*}
[\pi_{W_i}J(f),J(f)] &=[\pi_{W_i}J(f),f]_J &=[J\pi_{W_i}J(f),f]\\
&=[\pi_{JW_i}(f),f]
\end{eqnarray*}
In terms of the inequality (\ref{JFFEQ}) we have
\begin{equation*}
A_{\pm}[f,f]\leq\sum_{i\in{I_{\pm}}}v_i^2|[{\pi_{J(W_i)}|}_{J(M_\pm)}(f),f]|~{\leq}~B_{\pm}[f,f]~\textmd{for every }f\in{J(M_\pm)}
\end{equation*}
\end{proof}
Let $T_{J(W),v}$ be the synthesis operator for the $J$-fusion frame $\{(J(W_i),v_i):i\in{I}\}$. By using similar arguments we also have $T_{{J(W),v}_+}^{\#}(f)=\{v_i\pi_{W_i}(f)\}_{i\in{I_+}}$ for all $f\in{\mathbb{K}}$. So $T_{J(W),v}^{\#}(f)=\{\sigma_iv_i\pi_{W_i}(f)\}_{i\in{I}}$ for all $f\in{\mathbb{K}}$. Also note that $T_{{J(W),v}_+}^{\#}(f)=\{v_i\pi_{J(W_i)}(f)\}_{i\in{I_+}}$ for all $f\in{J(M_+)}$.
\begin{corollary}
For all $f\in{\mathbb{K}}$, we have $\pi_{W_i}\pi_{J(M_+)}=\pi_{J(W_i)}$.
\end{corollary}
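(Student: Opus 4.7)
The plan is to mirror the derivation of Corollary~\ref{RPS} but now for the $J$-fusion frame $\{(J(W_i),v_i):i\in I\}$ supplied by the preceding lemma. The paragraph just above the corollary already records two formulas for its analysis operator's positive part: $T_{{J(W),v}_+}^{\#}(f)=\{v_i\pi_{W_i}(f)\}_{i\in I_+}$ for every $f\in\mathbb{K}$, and $T_{{J(W),v}_+}^{\#}(f)=\{v_i\pi_{J(W_i)}(f)\}_{i\in I_+}$ for every $f\in J(M_+)$. Equating the $i$-th coordinate on the common domain yields the pointwise identity $\pi_{W_i}(g)=\pi_{J(W_i)}(g)$ for every $g\in J(M_+)$ and every $i\in I_+$.

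To upgrade this pointwise identity to the operator identity $\pi_{W_i}\pi_{J(M_+)}=\pi_{J(W_i)}$ on all of $\mathbb{K}$, I would split an arbitrary $f\in\mathbb{K}$ orthogonally (in the associated Hilbert space) as $f=\pi_{J(M_+)}(f)+(I-\pi_{J(M_+)})(f)$. The second summand lies in $J(M_+)^{\perp}$, and since $J(W_i)\subset J(M_+)$ for each $i\in I_+$ (obtained by applying $J$ to the inclusion $W_i\subset M_+$ used in the preceding lemma), the projection $\pi_{J(W_i)}$ kills it. On the first summand, which belongs to $J(M_+)$, the coordinate equality from the previous step delivers $\pi_{J(W_i)}(\pi_{J(M_+)}(f))=\pi_{W_i}(\pi_{J(M_+)}(f))$, which is exactly the asserted identity.

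There is no serious obstacle beyond verifying the inclusion $J(W_i)\subset J(M_+)$, which is immediate. In fact, the cleanest route is to invoke Corollary~\ref{RPS} directly with the $J$-fusion frame $\{(W_i,v_i)\}$ replaced by $\{(J(W_i),v_i)\}$: since $J^2=I$ we have $J(J(W_i))=W_i$, and the maximal uniformly $J$-positive subspace of this new frame is $J(M_+)$, so the statement of Corollary~\ref{RPS} applied in that setting reads precisely $\pi_{W_i}\pi_{J(M_+)}=\pi_{J(W_i)}$.
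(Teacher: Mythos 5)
Your proposal is correct and follows essentially the same route as the paper: the paper states this corollary without a separate proof, as an immediate consequence of the two formulas for $T_{{J(W),v}_+}^{\#}$ recorded just above it (valid on $\mathbb{K}$ and on $J(M_+)$ respectively), which are exactly the formulas you equate; your final observation that it is Corollary~\ref{RPS} applied to the frame $\{(J(W_i),v_i)\}$ with $J^2=I$ is likewise the intended reading. Your explicit orthogonal splitting of $f$ and the use of $J(W_i)\subset J(M_+)$ merely fill in details the paper leaves implicit.
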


\subsection{Bounds of $J$-fusion frame}
\begin{definition}
Let $\mathbb{F}=\{(W_i,v_i):i\in{I}\}$ be a $J$-fusion frame for $\mathbb{K}$ then there exists constants $B_-$, $A_-$, $A_+$ and $B_+$ such that $-\infty<B_-\leq A_-<0<A_+\leq B_+<\infty$. These constants are the $J$-fusion frame bounds of the $J$-fusion frame $\mathbb{F}$ in $\mathbb{K}$. If these bounds are optimal then they are called optimal $J$-fusion frame bound.
\end{definition}
\begin{definition}
The reduced minimum modulus $\gamma(T)$ of an operator $T\in{L(\mathbb{H},\mathbb{K})}$ is defined by
\begin{equation*}
\gamma(T)=inf\{\|Tx\|:x\in N(T )^{\perp}, \|x\|=1\}.
\end{equation*}
\end{definition}
It is well known that $\gamma(T)=\gamma(T^\ast)=\gamma(TT^\ast)^{\frac{1}{2}}$.

\vspace{0.2 cm}
We want to calculate $J$-fusion frame bounds of a $J$-fusion frame in a Krein space. Let $F=\{(W_i,v_i):i\in{I}\}$ be a $J$-frame of subspaces for the Krein space $\mathbb{K}$. Then for all $f\in{M_+}$ we have
\begin{equation*}
\begin{split}
\sum_{i\in{I_+}}v_i^2[\pi_{W_i}|_{M_+}(f),f] & =\sum_{i\in{I_+}}v_i^2[\pi_{W_i}(f),\pi_{W_i}(f)]\\
=\|T_{{W,v}_+}^{\#}(f)\|_{J_2}^2 & \leq \|T_{{W,v}_+}^{\#}\|_{J_2}^2\|f\|_J^2\\
& \leq \frac{1}{\gamma(G_{M_+})}\|T_{{W,v}_+}\|_J^2~[f,f]
\end{split}
\end{equation*}
Comparing with equation (\ref{JFFEQ}) we have $B_+=\frac{1}{\gamma(G_{M_+})}\|T_{{W,v}_+}\|_J^2$. Again for all $f\in{M_+}$,
\begin{equation*}
\sum_{i\in{I_+}}v_i^2[\pi_{W_i}|_{M_+}(f),f] =\sum_{i\in{I_+}}v_i^2[\pi_{W_i}(f),\pi_{W_i}(f)] =\|T_{{W,v}_+}^{\#}(f)\|_{J_2}^2
\end{equation*}
Now if $f\in{\mathbb{K}}$, then $T_{{W,v}_+}^{\#}(f)=\{v_i\pi_{J(W_i)}(f)\}_{i\in{I_+}}$. So,
$$\|T_{{W,v}_+}^{\#}(f)\|_{J_2}^2=\sum_{i\in{I_+}}v_i^2[\pi_{W_i}J(f),\pi_{W_i}J(f)]$$
For any $f\in{\mathbb{K}}$, $\pi_{M_+}J(f)\in{M_+}$, so $T_{{W,v}_+}^{\#}(\pi_{M_+}J(f))=\{v_i\pi_{J(W_i)}\pi_{M_+}J(f)\}_{i\in{I_+}}$. Using corollary (\ref{RPS}), we have $T_{{W,v}_+}^{\#}(\pi_{M_+}J(f))=\{v_i\pi_{W_i}J(f)\}$. Hence,
\begin{equation*}
\begin{split}
\|T_{{W,v}_+}^{\#}(f)\|_{J_2}^2 & =\|T_{{W,v}_+}^{\#}\pi_{M_+}J(f)\|_{J_2}^2\\
& \geq \gamma(T_{{W,v}_+}^{\#})^2\|\pi_{M_+}J(f)\|_J^2\\
& =\gamma(T_{{W,v}_+})^2\|\pi_{M_+}J(f)\|_J^2\\
& =\gamma(T_{{W,v}_+})^2\|G_{M_+}(f)\|_J^2\\
& \geq \gamma(T_{{W,v}_+})^2\gamma(G_{M_+})^2\|f\|_J^2\\
& \geq \gamma(T_{{W,v}_+})^2\gamma(G_{M_+})^2~[f,f].
\end{split}
\end{equation*}
Comparing with equation (\ref{JFFEQ}) we have $A_+=\gamma(T_{{W,v}_+})^2\gamma(G_{M_+})^2$. Similarly we have $B_-=-\frac{1}{\gamma(G_{M_-})}
\|T_{{W,v}_-}\|_J^2$ and $A_-=-\gamma(T_{{W,v}_-})^2\gamma(G_{M_-})^2$.

Here of course the $J$-fusion frame bounds calculated above are not optimal.

Let $\{(W_i,v_i):i\in{I}\}$ be a $J$-fusion frame of subspaces for the Krein space $\mathbb{K}$. Then according to our definition $M_+=\overline{\sum_{i\in{I_+}}W_i}$ and $M_-=\overline{\sum_{i\in{I_-}}W_i}$. Then $M_+$ and $M_-$ are closed uniformly $J$-positive and $J$-negative subspace respectively. Now since $(\mathbb{K},[\cdot,\cdot],J)$ be a Krein space. So let $\mathbb{K}=\mathbb{K}^{+}{[\dot{\oplus}]}\mathbb{K}^-$ be the cannonical decomposition of $\mathbb{K}$. Let $K$ be the angular operator of $M_+$ with respect to $\mathbb{K}^+$. Then $\|K\|=\frac{1-\gamma(G_{M_+})}{1+\gamma(G_{M_+})}$ and also the domain of definition of $K$ is $\mathbb{K}^+$. Similarly let $Q$ be the angular operator of $M_-$ with respect to $\mathbb{K}^-$. Then $\|Q\|=\frac{1-\gamma(G_{M_-})}{1+\gamma(G_{M_-})}$ and also the domain of definition of $Q$ is $\mathbb{K}^-$.

\section{The $J$-fusion frame operator}
Let $\{(W_i,v_i\}_{i\in I}$ be a $J$-fusion frame for the Krein space $\mathbb{K}$. Then $I_+=\{i\in{I}:[f_i,f_i]\geq 0~\textmd{for all~} f_i\in{W_i}\}$ and $I_+=\{i\in{I}:[f_i,f_i]<0~\textmd{for all~} f_i\in{W_i}\}$.
\begin{definition}
The linear operator $S_{W,v}:\mathbb{K}\rightarrow{\mathbb{K}}$ defined by $S_{W,v}(f)=\sum_{i\in I}\sigma_iv_i^2\pi_{J(W_i)}(f)$ is said to be the $J$-fusion frame operator for the $J$-fusion frame $\{(W_i,v_i\}_{i\in I}$. Here $\sigma_{i}=1$ if $i\in{I_+}$ and $\sigma_{i}=-1$ if $i\in{I_-}$.
\end{definition}
From the definition it readily follows that $S_{W,v}=T_{W,v}T_{W,v}^{\#}$. Also $S_{W,v}$ is the sum of two $J$-positive operators. The statement also easily follows from the definition. Let $S_{{W,v}_+}:\mathbb{K}\rightarrow{\mathbb{K}}$ is defined by $S_{{W,v}_+}(f)=\sum_{i\in I_+}v_i^2\pi_{J(W_i)}(f)$. Then $[S_{{W,v}_+}(f),f]=[\sum_{i\in I_+}v_i^2\pi_{J(W_i)}(f),f]=\sum_{i\in I_+}v_i^2[\pi_{W_i}(f),f]_J$. So $S_{{W,v}_+}$ is a $J$-positive operator and also $S_{{W,v}_+}=T_{{W,v}_+}T_{{W,v}_+}^{\#}$. Similarly let $S_{{W,v}_-}=-T_{{W,v}_-}T_{{W,v}_-}^{\#}$. Then $S_{{W,v}_-}$ is also a $J$-positive operator. We also have $S_{W,v}=S_{{W,v}_+}-S_{{W,v}_-}$.
\begin{theorem}
If $\{(W_i,v_i)\}_{i\in I}$ is a $J$-fusion frame for the Krein space $\mathbb{K}$ wth synthesis operator $T_{W,v}\in{L\Big(\big(\sum_{i\in{I}}\oplus{W_i}\big)_{\ell_2},\mathbb{K}\Big)}$ then the $J$-fusion frame operator $S_{W,v}$ is bijective and $J$-selfadjoint.
\end{theorem}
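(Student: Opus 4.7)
$J$-selfadjointness is purely algebraic and follows from the factorization $S_{W,v}=T_{W,v}T_{W,v}^{\#}$ established immediately above, together with the two standard properties of the Krein adjoint, $(AB)^{\#}=B^{\#}A^{\#}$ and $(A^{\#})^{\#}=A$. These give
\[
S_{W,v}^{\#}=\bigl(T_{W,v}T_{W,v}^{\#}\bigr)^{\#}=\bigl(T_{W,v}^{\#}\bigr)^{\#}\,T_{W,v}^{\#}=T_{W,v}T_{W,v}^{\#}=S_{W,v}.
\]

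For injectivity, suppose $S_{W,v}(f)=0$. Writing $S_{W,v}=S_{W,v_+}-S_{W,v_-}$ and using $R(S_{W,v_\pm})\subseteq R(T_{W,v_\pm})=M_\pm$ together with the direct-sum decomposition $\mathbb{K}=M_+\oplus M_-$ (from the $J$-fusion frame definition), each summand must vanish separately: $S_{W,v_+}(f)=0=S_{W,v_-}(f)$. Pairing the first with $f$,
\[
0=[S_{W,v_+}(f),f]=[T_{W,v_+}T_{W,v_+}^{\#}(f),f]=[T_{W,v_+}^{\#}(f),T_{W,v_+}^{\#}(f)].
\]
Since every $W_i$ with $i\in I_+$ is uniformly $J$-positive, the Krein form on $\bigl(\sum_{i\in I_+}\oplus W_i\bigr)_{\ell_2}$ is a genuine positive definite inner product, forcing $T_{W,v_+}^{\#}(f)=0$, i.e.\ $Q_{W_i}(f)=0$ for all $i\in I_+$, and hence $f\in M_+^{[\perp]}$. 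The symmetric argument yields $f\in M_-^{[\perp]}$, so $f\in(M_++M_-)^{[\perp]}=\mathbb{K}^{[\perp]}=\{0\}$.

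For surjectivity I would combine density and closed range. Density is free: from $S_{W,v}^{\#}=S_{W,v}$ the Hilbert adjoint satisfies $S_{W,v}^{*}=JS_{W,v}J$, so $N(S_{W,v}^{*})=JN(S_{W,v})=\{0\}$ and $R(S_{W,v})$ is dense in the associated Hilbert topology. For closedness I would use the frame bounds of the preceding subsection: the restrictions $S_{W,v_+}|_{M_+}$ and $-S_{W,v_-}|_{M_-}$ are the Hilbert-space fusion frame operators of $\{(W_i,v_i)\}_{i\in I_\pm}$ on the Hilbert spaces $(M_\pm,\pm[\cdot,\cdot])$, hence invertible. Expressing $S_{W,v}$ as a $2\times 2$ block operator matrix with respect to $\mathbb{K}=M_+\oplus M_-$, the diagonal blocks are invertible; a Schur-complement argument, controlled by the frame bounds and the $J$-selfadjointness of the full operator, yields $\|S_{W,v}(f)\|_J\geq c\|f\|_J$ for some $c>0$, and hence closed range.

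The main obstacle will be this closed-range estimate: the off-diagonal blocks $S_{W,v_+}|_{M_-}$ and $-S_{W,v_-}|_{M_+}$ are in general neither zero nor small, so invertibility of the block matrix does not follow automatically from invertibility of its diagonal. A parallel route I would attempt is to use the surjectivity of $T_{W,v}$ (immediate from $R(T_{W,v})=M_++M_-=\mathbb{K}$) together with the Krein closed-range theorem applied to $T_{W,v}^{\#}$, and to transport the Hilbert identity ``$T$ surjective $\Rightarrow TT^{*}$ bijective'' to the Krein setting via the relation $T_{W,v}^{\#}=J_2 T_{W,v}^{*}J$.
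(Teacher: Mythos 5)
Your $J$-selfadjointness argument and your injectivity argument are both correct and essentially match the paper; in fact your injectivity step fills in a detail the paper leaves implicit. The paper concludes $f=0$ directly from $S_{{W,v}_+}(f)=S_{{W,v}_-}(f)=0$, whereas you justify this by pairing with $f$, using positive definiteness of the form on $\big(\sum_{i\in I_+}\oplus W_i\big)_{\ell_2}$ to get $T_{{W,v}_+}^{\#}(f)=0$, and landing in $M_+^{[\perp]}\cap M_-^{[\perp]}=(M_++M_-)^{[\perp]}=\{0\}$. That part is fine.

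The genuine gap is surjectivity. Your density claim is sound, but neither of your routes to closed range works as stated. The Schur-complement route you yourself leave open, correctly noting the off-diagonal blocks are not controlled. The fallback --- transporting ``$T$ surjective $\Rightarrow TT^{*}$ bijective'' to the Krein setting via $T_{W,v}^{\#}=J_2T_{W,v}^{*}J$ --- is false in general: one gets $T_{W,v}T_{W,v}^{\#}=T_{W,v}J_2T_{W,v}^{*}J$, and the indefinite factor $J_2$ sandwiched between $T_{W,v}$ and $T_{W,v}^{*}$ can create a nontrivial kernel even when $T_{W,v}$ is surjective; this failure is exactly why the $J$-frame and $J$-fusion frame definitions demand maximal uniform definiteness of $R(T_{\pm})$ rather than mere surjectivity of the synthesis operator (compare the paper's example of a complete family of definite subspaces of $\mathbb{R}^3$ that is not a $J$-fusion frame). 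The paper's surjectivity argument needs no norm estimate at all: it uses the dual decomposition $\mathbb{K}=M_+^{[\perp]}\oplus M_-^{[\perp]}$ together with $N(S_{{W,v}_{\pm}})=M_{\pm}^{[\perp]}$ and $R(S_{{W,v}_{\pm}})=M_{\pm}$ (the latter because $\{(W_i,v_i)\}_{i\in I_{\pm}}$ is an honest fusion frame for the Hilbert space $(M_{\pm},\pm[\cdot,\cdot])$, so its frame operator maps onto $M_{\pm}$). Since $S_{W,v}$ agrees with $S_{{W,v}_+}$ on $M_-^{[\perp]}$ and with $-S_{{W,v}_-}$ on $M_+^{[\perp]}$, one computes $S_{W,v}(M_-^{[\perp]})=S_{{W,v}_+}(M_+^{[\perp]}\oplus M_-^{[\perp]})=R(S_{{W,v}_+})=M_+$ and symmetrically $S_{W,v}(M_+^{[\perp]})=M_-$, whence $R(S_{W,v})=M_++M_-=\mathbb{K}$. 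You should replace your surjectivity paragraph with an argument of this kind; as written, the proof is incomplete at precisely that point.
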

\begin{proof}
Let $S_{W,v}(f)=0$ then $S_{{W,v}_+}(f)=S_{{W,v}_-}(f)$. Again $R(S_{{W,v}_+})=M_+$ and $R(S_{{W,v}_-})=M_-$. But $M_+\cap M_-=\{0\}$. These together implies that $f=0$. Hence $S_{W,v}$ is injective. To prove the surjectivity of $S_{W,v}$ we consider the oblique decomposition of $\mathbb{K}$. We can write $\mathbb{K}=M_+^{[\perp]}\oplus M_-^{[\perp]}$. Now $N(S_{{W,v}_{\pm}})=M_{\pm}^{[\perp]}$. So $S_{W,v}(M_{\pm}^{[\perp]})=S_{{W,v}_{\mp}}(M_{\pm}^{[\perp]})$ and $R(S_{W,v})=S_{{W,v}_{-}}(M_{+}^{[\perp]})+S_{{W,v}_{+}}(M_{-}^{[\perp]})=R(S_{{W,v}_+})+R(S_{{W,v}_-})=M_++M_+=\mathbb{K}$. Therefore $S_{W,v}$ is bijective. Since $S_{W,v}=T_{W,v}T_{W,v}^{\#}$ hence $S_{W,v}$ is $J$-selfadjoint.
\end{proof}
\begin{theorem}
If $\{(W_i,v_i)\}_{i\in I}$ is a $J$-fusion frame for the Krein space $\mathbb{K}$ with $J$-fusion frame operator $S_{W,v}$, then $\{(S_{W,v}^{-1}(W_i),v_i)\}_{i\in I}$ is a $J$-fusion frame for $\mathbb{K}$ with $J$-fusion frame operator $S_{W,v}^{-1}$.
\end{theorem}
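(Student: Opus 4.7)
The plan is to proceed in three stages: (i) verify that each $U_i := S_{W,v}^{-1}(W_i)$ is a closed subspace of $\mathbb{K}$ of the same definiteness type (uniformly $J$-positive or $J$-negative) as $W_i$; (ii) check that $\{(U_i,v_i)\}_{i\in I}$ is Bessel and that the ranges of the associated partial synthesis operators are maximal uniformly $J$-definite, so the family is a genuine $J$-fusion frame; and (iii) identify its $J$-fusion frame operator with $S_{W,v}^{-1}$.

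For stage (i), I would use that $S_{W,v}$ is a bounded bijective $J$-selfadjoint operator (by the previous theorem), so $S_{W,v}^{-1}$ is bounded, bijective and $J$-selfadjoint as well. Closedness of $U_i$ is automatic from continuity of $S_{W,v}^{-1}$. For the definiteness type, the plan is to exploit $[S_{W,v}^{-1}x,y]=[x,S_{W,v}^{-1}y]$ together with the explicit splitting $S_{W,v}=S_{{W,v}_+}-S_{{W,v}_-}$ and $R(S_{{W,v}_\pm})=M_\pm$ to locate $S_{W,v}^{-1}(M_+)$ and $S_{W,v}^{-1}(M_-)$, which should emerge as the new maximal $J$-positive and $J$-negative reservoirs.

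For stage (ii), the Bessel property of $\{(U_i,v_i)\}$ will be inherited from the original family via boundedness of $S_{W,v}^{-1}$: concretely, I plan to show that the synthesis operator of the new family is $T' := S_{W,v}^{-1}T_{W,v}$, with $T'_\pm = S_{W,v}^{-1}T_{{W,v}_\pm}$, so that $R(T'_\pm)=S_{W,v}^{-1}(M_\pm)$. Maximal $J$-definiteness of $S_{W,v}^{-1}(M_\pm)$ then follows because the bounded $J$-selfadjoint bijection $S_{W,v}$ (and its inverse) preserves the lattice of maximal uniformly $J$-definite subspaces; this uses stage (i) plus the identity $\mathbb{K}=S_{W,v}^{-1}(M_+)\,\dot{+}\,S_{W,v}^{-1}(M_-)$.

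Stage (iii) is the main obstacle. The $J$-fusion frame operator of the new family is $\sum_{i\in I}\sigma_i v_i^2\pi_{J(U_i)}$, and the target is to show this equals $S_{W,v}^{-1}$. My plan is to take advantage of the factorization $T' = S_{W,v}^{-1}T_{W,v}$: using the formula $(AB)^{\#}=B^{\#}A^{\#}$ together with $J$-selfadjointness of $S_{W,v}^{-1}$, one gets $(T')^{\#}=T_{W,v}^{\#}S_{W,v}^{-1}$, and hence the new $J$-fusion frame operator computes to
\[
T'(T')^{\#}=S_{W,v}^{-1}T_{W,v}T_{W,v}^{\#}S_{W,v}^{-1}=S_{W,v}^{-1}S_{W,v}S_{W,v}^{-1}=S_{W,v}^{-1},
\]
which closes the argument. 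The delicate point here is justifying that the synthesis operator of $\{(U_i,v_i)\}$ really is $S_{W,v}^{-1}T_{W,v}$, which amounts to identifying $\pi_{J(U_i)}$ with the appropriate conjugate of $\pi_{JW_i}$ by $S_{W,v}^{-1}$ at the level of the analysis formula derived earlier in the paper. Verifying this projection identity under the bounded $J$-selfadjoint change of coordinates $S_{W,v}^{-1}$, so that the Bessel-family bookkeeping and the formula $T_{{W,v}_\pm}^{\#}(f)=\{v_i\pi_{JW_i}(f)\}_{i\in I_\pm}$ transfer cleanly to the new family, is where I expect the most effort to be required.
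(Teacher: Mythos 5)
Your plan follows the paper's proof essentially step for step: the synthesis operator of the new family is $S_{W,v}^{-1}T_{W,v}$, the ranges of its positive and negative parts are $S_{W,v}^{-1}(M_\pm)$, and the frame operator comes from the factorization $(S_{W,v}^{-1}T_{W,v})(T_{W,v}^{\#}S_{W,v}^{-1})=S_{W,v}^{-1}S_{W,v}S_{W,v}^{-1}=S_{W,v}^{-1}$. One caution about stage (ii): the principle you invoke there, that a bounded $J$-selfadjoint bijection ``preserves the lattice of maximal uniformly $J$-definite subspaces,'' is false in general --- already in a two-dimensional Krein space with $J=\mathrm{diag}(1,-1)$ an invertible $J$-selfadjoint operator can map a positive vector to a negative one. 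What actually makes the argument work, and what the paper uses, is the concrete identification $S_{W,v}^{-1}(M_\pm)=M_\mp^{[\perp]}$: since $N(S_{{W,v}_\pm})=M_\pm^{[\perp]}$ and $\mathbb{K}=M_+^{[\perp]}\dot{+}M_-^{[\perp]}$ (the oblique decomposition from the bijectivity proof), $S_{W,v}$ maps $M_-^{[\perp]}$ onto $M_+$, and $M_-^{[\perp]}$ is maximal uniformly $J$-positive because $M_-$ is maximal uniformly $J$-negative. This gives both the definiteness and the maximality in one stroke. Your stage (i) already points at this via the splitting $S_{W,v}=S_{{W,v}_+}-S_{{W,v}_-}$, so the fix is simply to carry that computation through and drop the appeal to the general preservation claim; the rest of your outline, including the identification of the synthesis operator and the frame-operator computation, matches the paper's argument.
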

\begin{proof}
We observe that the operator $S_{W,v}^{-1}T_{W,v}\in{L\Big(\big(\sum_{i\in{I}}\oplus{W_i}\big)_{\ell_2},\mathbb{K}\Big)}$ is the synthesis operator of $\{(S_{W,v}^{-1}(W_i),v_i)\}_{i\in I}$. Furthermore we have $S_{W,v}^{-1}(M_{\pm})=M_{\mp}^{[\perp]}$. So $\{S_{W,v}^{-1}(W_i)\}_{i\in I_+}$ is a collection of uniformly $J$-definite subspace of $M_-^{[\perp]}$. Similarly $\{S_{W,v}^{-1}(W_i)\}{i\in I_-}$ is a collection of uniformly $J$-definite subspace of $M_+^{[\perp]}$. Also $R(S_{W,v}^{-1}T_{{W,v}_+})$ and $R(S_{W,v}^{-1}T_{{W,v}_-})$ is maximal uniformly $J$-positive and $J$-negative subspace respectively. So from the definition of $J$-fusion frame we have that $\{(S_{W,v}^{-1}(W_i),v_i)\}_{i\in I}$ is a $J$-fusion frame for $\mathbb{K}$. $\{(S_{W,v}^{-1}(W_i),v_i)\}_{i\in I_{\pm}}$ is fusion frame for the Hilbert space $(M_{\mp}^{\perp},\pm[\cdot,\cdot])$. Now $(S_{W,v}^{-1}T_{W,v})(T_{W,v}^{\#}S_{W,v}^{-1})=S_{W,v}^{-1}$. Hence $S_{W,v}^{-1}$ is the required $J$-fusion frame operator. $\{(S_{W,v}^{-1}(W_i),v_i)\}_{i\in I}$ is called the cannonical $J$-dual fusion frame for $\{(W_i,v_i)\}_{i\in I}$ in $\mathbb{K}$.
\end{proof}
Let $(\mathbb{K},[\cdot,\cdot],J)$ be a Krein space and $\{f_i:i\in I\}$ be a $J$-frame for the Krein space $\mathbb{K}$. Then a careful investigation reveals that the sequence is not arbitrarily scattered in the Krein space. In fact the set of all positive elements form a maximal uniformly $J$-positive subspace $M_+=\overline{span\{f_i:i\in I_+\}}$ and the set of all negative elements form a maximal uniformly $J$-negative subspace $M_-=\overline{span\{f_i:i\in I_-\}}$. Now if we apply the $J$-frame operator $S^{-1}$ on the $J$-frame sequence then we know that the corresponding image set also decomposes the Krein space into two halves namely $M_+^{[\perp]}$ and $M_-^{[\perp]}$. So we have a nice distribution of the set $\{S^{-1}f_i:i\in I\}$. So in a rough sense we can say that the inverse of the $J$-frame operator \textit{i.e.} $S^{-1}$ rotates any uniformly $J$-definite subspace onto a uniformly $J$-definite subspace preserving the definiteness. Now let $-\infty<B_-\leq A_-<0<A_+\leq B_+<\infty$ be the optimal $J$-frame bounds for the $J$-frame $\{f_i:i\in I\}$. Now $\{S^{-1}f_i:i\in I\}$ is the cannonical $J$-dual frame for $\{f_i:i\in I\}$ in the Krein space $\mathbb{K}$. So the optimal frame bounds of this frame exists. We want to find a relation between the optimal bounds of the given $J$-frame and corresponding cannonical $J$-dual frame.
\begin{theorem}\label{BCJFF}
Let $\{f_i:i\in I\}$ be a $J$-frame for the Krein space $\mathbb{K}$ with optimal frame bounds $-\infty<B_-\leq A_-<0<A_+\leq B_+<\infty$. Then the cannonical $J$-dual frame has optimal frame bounds $-\infty<\frac{1}{A_-}\leq \frac{1}{B_-}<0<\frac{1}{B_+}\leq \frac{1}{A_+}<\infty$.
\end{theorem}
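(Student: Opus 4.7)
The plan is to imitate the classical Hilbert-space duality argument. In the Hilbert case, a frame with operator $S$ and optimal bounds $A,B$ has canonical dual $\{S^{-1}f_i\}$ with frame operator $S^{-1}$ and reciprocal bounds $1/B,1/A$. The aim here is to adapt this to the $J$-frame setting, handling separately the positive and negative halves of the frame inequality.

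First I would set up the dual. The $J$-frame operator $S$ is bijective and $J$-selfadjoint (the vector analogue of the $J$-fusion frame operator theorem already established in the excerpt), so $S^{-1}$ is also $J$-selfadjoint. Using $S^{-1}(M_{\pm})=M_{\mp}^{[\perp]}$ (the analogue of the fact used in the $J$-fusion dual theorem), the canonical dual $\{S^{-1}f_i\}$ has the same partition $I_{\pm}$ of indices, with positive subspace $\tilde M_+=M_-^{[\perp]}$ and negative subspace $\tilde M_-=M_+^{[\perp]}$. Next, for $f\in\tilde M_+$, the $J$-selfadjointness of $S^{-1}$ gives
\begin{equation*}
\sum_{i\in I_+}\big|[f,S^{-1}f_i]\big|^2 = \sum_{i\in I_+}\big|[S^{-1}f,f_i]\big|^2 = [S_+ S^{-1}f,\,S^{-1}f] = [S^{-1}S_+S^{-1}f,\,f],
\end{equation*}
where $S_+$ is the positive part of the original $J$-frame operator. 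Thus the positive part of the dual $J$-frame operator is $\tilde S_+=S^{-1}S_+S^{-1}$; an analogous calculation gives $\tilde S_-=S^{-1}S_-S^{-1}$.

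The heart of the proof is to show that $\tilde S_+$ restricted to $M_-^{[\perp]}$ has optimal spectral bounds $1/B_+$ and $1/A_+$ on the Hilbert space $(M_-^{[\perp]},[\cdot,\cdot])$, and symmetrically for $\tilde S_-$ on $M_+^{[\perp]}$. The key structural identity is $S\tilde S_+S=S_+$, combined with the bijection $S|_{M_-^{[\perp]}}=S_+|_{M_-^{[\perp]}}\colon M_-^{[\perp]}\to M_+$; together these should intertwine $\tilde S_+|_{M_-^{[\perp]}}$ with (the inverse of) $\hat S_+:=S_+|_{M_+}$. Since $\hat S_+$ has optimal spectral bounds $A_+,B_+$ as a positive operator on $(M_+,[\cdot,\cdot])$, its inverse has bounds $1/B_+$ and $1/A_+$. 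I expect the main obstacle to be that $S|_{M_-^{[\perp]}}$ is in general not a Hilbert-space isometry between $(M_-^{[\perp]},[\cdot,\cdot])$ and $(M_+,[\cdot,\cdot])$, so transferring spectral bounds through it requires care; the $J$-selfadjointness and $J$-positivity of $\tilde S_+$, together with the operator identity $S\tilde S_+S=S_+$, should be the lever used to force the desired spectral equivalence. The negative half is then handled symmetrically using $S_-$ and $M_+^{[\perp]}$.
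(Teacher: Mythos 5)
Your setup is correct and you have put your finger on exactly the right crux, but the crux is left unproved --- and it cannot be proved, because the spectral equivalence you hope to force is false in general. The identity $S\tilde S_+S=S_+$ together with $S|_{M_-^{[\perp]}}=S_+|_{M_-^{[\perp]}}$ gives, for $f\in M_-^{[\perp]}$, $S\tilde S_+(Sf)=S_+f=Sf$, hence $\tilde S_+(Sf)=f$. This pins down $\tilde S_+$ on $S(M_-^{[\perp]})=M_+$, namely $\tilde S_+|_{M_+}=(S|_{M_-^{[\perp]}})^{-1}$; but the dual frame inequality is taken over $f\in M_-^{[\perp]}$, so what is needed is the spectrum of $\tilde S_+|_{M_-^{[\perp]}}=S^{-1}S_+S^{-1}|_{M_-^{[\perp]}}$, and $S^{-1}$ carries $M_-^{[\perp]}$ to a third subspace $S^{-1}(M_-^{[\perp]})$ which is neither $M_+$ nor $M_-^{[\perp]}$. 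No appeal to $J$-selfadjointness or $J$-positivity bridges this, since $S|_{M_-^{[\perp]}}\colon (M_-^{[\perp]},[\cdot,\cdot])\to(M_+,[\cdot,\cdot])$ is not unitary. Concretely, in $\mathbb{K}=\mathbb{R}^2$ with $[x,y]=x_1y_1-x_2y_2$ take $f_1=(1,\tfrac12)$, $f_2=(0,1)$: then $M_+=\mathrm{span}\{f_1\}$, $A_+=B_+=[f_1,f_1]=\tfrac34$, the $J$-frame operator is $S=\bigl(\begin{smallmatrix}1&-1/2\\ 1/2&3/4\end{smallmatrix}\bigr)$, and $S^{-1}f_1=(1,0)$ spans $M_-^{[\perp]}$; the optimal positive bounds of the canonical dual are therefore $\tilde A_+=\tilde B_+=1$, not $1/B_+=1/A_+=\tfrac43$.

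For comparison, the paper's own proof stalls at the very same point: after obtaining $\tfrac{1}{B_+}\le (S_+|_{M_+})^{-1}\le\tfrac{1}{A_+}$ it identifies $(S_+|_{M_+})^{-1}$ with the positive part of $S^{-1}$ restricted to $M_-^{[\perp]}$ without justification; these operators act on different subspaces and, as the example shows (spectra $\{4/3\}$ versus $\{1\}$), are not spectrally equivalent. So your proposal follows essentially the paper's route and, to its credit, names the obstruction the paper silently steps over; but the missing step is not a technicality that ``care'' will supply. The reciprocal-bound conclusion does hold for the dual of $\{f_i\}_{i\in I_+}$ formed \emph{inside the Hilbert space} $(M_+,[\cdot,\cdot])$ using $(S_+|_{M_+})^{-1}$, which is a genuinely different family from the canonical $J$-dual $\{S^{-1}f_i\}_{i\in I_+}$; any correct proof would have to either change the statement to that dual or relate the two by quantifying the distortion of $S|_{M_-^{[\perp]}}$, which the reciprocal bounds as stated do not accommodate.
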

\begin{proof}
Let $S$ be the $J$-frame operator for the $J$-frame $\{f_i:i\in I\}$. Now consider the operator $S_+|_{M_+}$, it is a bijective, $J$-positive and $J$-selfadjoint. Also it is a frame operator for $\{f_i:i\in I_+\}$ in the Hilbert space $(M_+,[\cdot,\cdot])$. So, $A_+\leq S_+|_{M_+}\leq B_+$. Hence $\frac{1}{B_+}\leq (S_+|_{M_+})^{-1}\leq \frac{1}{A_+}$. But we know that $(S_+|_{M_+})^{-1}=S^{-1}_+|_{M_-^{[\perp]}}$. Hence from the definition of $J$-frame it easily follows that $\frac{1}{B_+}$ and $\frac{1}{A_+}$ are the optimal frame bounds of the frame $\{S^{-1}f_i:i\in I_+\}$. Similarly we can show that $\frac{1}{A_-}$ and $\frac{1}{B_-}$ are the optimal frame bounds of the frame $\{S^{-1}f_i:i\in I_-\}$. Hence we establish our result.
\end{proof}
\begin{theorem}
Let $\{f_i:i\in I\}$ be a $J$-frame for the Krein space $\mathbb{K}$ with cannonical $J$-dual frame $\{S^{-1}f_i:i\in I\}$. Then for all $I_1\subset I$ and for all $f\in \mathbb{K}$ we have
$$\sum_{i\in{I_1}}\sigma_i|[f,f_i]|^2-\sum_{i\in{I}}\sigma_i|[S_{I_1}f,S^{-1}f_i]|^2=\sum_{i\in{I_1^c}}\sigma_i|[f,f_i]|^2-\sum_{i\in{I}}\sigma_i|[S_{I_1^c}f,S^{-1}f_i]|^2$$, where $\sigma_i=1$ if $i\in{I_+}$ and $\sigma_i=-1$ if $i\in{I_-}$.
\end{theorem}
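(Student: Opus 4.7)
The plan is to identify the partial $J$-frame operator
\[
S_{I_1}f:=\sum_{i\in I_1}\sigma_i[f,f_i]f_i,
\]
so that $S=S_{I_1}+S_{I_1^c}$ and each of $S$, $S_{I_1}$, $S_{I_1^c}$, $S^{-1}$ is $J$-selfadjoint (the $J$-selfadjointness of $S_J$ is an immediate computation from the symmetry of its defining sum, and $S^{-1}$ inherits $J$-selfadjointness from $S$). With this identification the claimed scalar identity collapses to an operator identity in $\mathbb{K}$, which I would then verify by direct expansion and cancellation.

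First I would record the following scalar-to-operator dictionary. For every $g\in\mathbb{K}$ and every $J\subseteq I$,
\[
\sum_{i\in J}\sigma_i|[g,f_i]|^2=\sum_{i\in J}\sigma_i[g,f_i][f_i,g]=[S_J g,g],
\]
and this quantity is real because $S_J$ is $J$-selfadjoint. Second, one checks that $\{S^{-1}f_i\}_{i\in I}$ has $J$-frame operator exactly $S^{-1}$: given $g=Sk$, the $J$-selfadjointness of $S$ gives $[Sk,S^{-1}f_i]=[k,f_i]$, so $\sum_i\sigma_i[g,S^{-1}f_i]S^{-1}f_i=S^{-1}\sum_i\sigma_i[k,f_i]f_i=S^{-1}(Sk)=S^{-1}g$; this is essentially the fact already used in Theorem \ref{BCJFF}. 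Consequently $\sum_{i\in I}\sigma_i|[g,S^{-1}f_i]|^2=[S^{-1}g,g]$ for every $g\in\mathbb{K}$.

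Applying this dictionary with $g=f$ and then with $g=S_{I_1}f$ (respectively $g=S_{I_1^c}f$), the claimed identity is equivalent to the operator-level statement
\[
[S_{I_1}f,f]-[S^{-1}S_{I_1}f,S_{I_1}f]=[S_{I_1^c}f,f]-[S^{-1}S_{I_1^c}f,S_{I_1^c}f].
\]
To verify this I would substitute $S_{I_1^c}=S-S_{I_1}$ on the right-hand side and expand $[S^{-1}(S-S_{I_1})f,(S-S_{I_1})f]$ into four pieces. Using $S^{-1}S=I$ and the $J$-selfadjointness of $S,S^{-1},S_{I_1}$ --- so that $[Sf,f]$, $[S_{I_1}f,f]$, $[S^{-1}h,h]$ are real and cross terms such as $[S^{-1}S_{I_1}f,Sf]$ collapse to $[S_{I_1}f,f]$ --- the expansion simplifies and, together with $[Sf,f]=[S_{I_1}f,f]+[S_{I_1^c}f,f]$, reduces both sides to the common expression $[S_{I_1}f,f]-[S^{-1}S_{I_1}f,S_{I_1}f]$.

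The principal obstacle is bookkeeping rather than depth: because $[\cdot,\cdot]$ is sesquilinear and indefinite on $\mathbb{K}$, one must repeatedly invoke $J$-selfadjointness to transfer operators across the bracket and to identify bracket values with their complex conjugates. A slightly more subtle preliminary point is verifying that the canonical $J$-dual $\{S^{-1}f_i\}$ has $J$-frame operator exactly $S^{-1}$; once this is in hand, and once one observes that $S_{I_1}$ --- while generally neither $J$-positive nor $J$-negative, since $I_1$ may mix indices from $I_+$ and $I_-$ --- is still $J$-selfadjoint, the proof becomes a purely algebraic manipulation based on the decomposition $S=S_{I_1}+S_{I_1^c}$.
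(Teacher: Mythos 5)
Your proposal is correct and follows essentially the same route as the paper: both rest on the decomposition $S=S_{I_1}+S_{I_1^c}$, the identification $\sum_{i\in J}\sigma_i|[g,f_i]|^2=[S_Jg,g]$ and $\sum_{i\in I}\sigma_i|[g,S^{-1}f_i]|^2=[S^{-1}g,g]$, and the $J$-selfadjointness of $S$, $S^{-1}$ and $S_{I_1}$ to reduce the claim to the scalar identity $[S_{I_1}f,f]-[S^{-1}S_{I_1}f,S_{I_1}f]=[S_{I_1^c}f,f]-[S^{-1}S_{I_1^c}f,S_{I_1^c}f]$. The only cosmetic difference is that the paper obtains this from the operator identity $a-a^2=b-b^2$ for $a=S^{-1}S_{I_1}$, $b=S^{-1}S_{I_1^c}$ with $a+b=I$ and then pairs with $g=Sf$, whereas you verify the same identity by expanding $S_{I_1^c}=S-S_{I_1}$ directly.
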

\begin{proof}
Let $S$ denote the frame operator for $\{f_i:i\in I\}$. Then we have $S(f)=\sum_{i\in I}\sigma_i[f,f_i]f_i$. Also $S=S_{I_1}+S_{I_1^c}$. Then $I=S^{-1}S_{I_1}+S^{-1}S_{I_1^c}$. From operator theory we have $S^{-1}S_{I_1}-S^{-1}S_{I_1^c}=S^{-1}S_{I_1}S^{-1}S_{I_1}-S^{-1}S_{I_1^c}S^{-1}S_{I_1^c}$. Then for every $f,g\in{\mathbb{K}}$ we have
\begin{equation*}
[S^{-1}S_{I_1}(f),g]-[S^{-1}S_{I_1}S^{-1}S_{I_1}(f),g]=[S_{I_1}(f),S^{-1}g]-[S^{-1}S_{I_1}(f),S_{I_1}S^{-1}g]
\end{equation*}
Now if we choose $g=S(f)$, then the above equation reduces to 
\begin{equation*}
=[S_{I_1}(f),f]-[S^{-1}S_{I_1}(f),S_{I_1}(f)]=\sum_{i\in{I_1}}\sigma_i|[f,f_i]|^2-\sum_{i\in{I}}\sigma_i|[S_{I_1}f,S^{-1}f_i]|^2
\end{equation*}
Now replacing $I_1$ by $I_1^c$ we can have the other part of the equality. Combining we finally get
\begin{equation*}
\sum_{i\in{I_1}}\sigma_i|[f,f_i]|^2-\sum_{i\in{I}}\sigma_i|[S_{I_1}f,S^{-1}f_i]|^2=\sum_{i\in{I_1^c}}\sigma_i|[f,f_i]|^2-\sum_{i\in{I}}\sigma_i|[S_{I_1^c}f,S^{-1}f_i]|^2
\end{equation*}
\end{proof}
Theorem (\ref{BCJFF}) can easily be generalized in the setting for $J$-fusion frame. We only state the result in the following theorem. Casazza et al. \cite{ck2003} calculated cannonical $J$-fusion frame bounds in a more general setting. But an error was pointed out by Gavruta \cite{pg}. But we calculate cannonical $J$-fusion frame bounds different from their approach.
\begin{theorem}
Let $\{(W_i,v_i):i\in I\}$ be a $J$-fusion frame for the Krein space $\mathbb{K}$ with optimal frame bounds $-\infty<B_-\leq A_-<0<A_+\leq B_+<\infty$. Then the cannonical $J$-dual fusion frame has optimal frame bounds $-\infty<\frac{1}{A_-}\leq \frac{1}{B_-}<0<\frac{1}{B_+}\leq \frac{1}{A_+}<\infty$.
\end{theorem}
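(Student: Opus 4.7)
The strategy is to transfer the argument of Theorem~\ref{BCJFF} essentially verbatim from $J$-frames to $J$-fusion frames, exploiting the decomposition $S_{W,v}=S_{{W,v}_+}-S_{{W,v}_-}$ into two $J$-positive pieces and the identification of the canonical $J$-dual fusion frame $\{(S_{W,v}^{-1}(W_i),v_i)\}_{i\in I}$ with $J$-fusion frame operator $S_{W,v}^{-1}$ established just above. The key point is that the positive and negative halves can be analyzed separately as genuine Hilbert-space fusion frames and then reassembled.

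First I would restrict attention to the positive side. By the theorem relating $\mathbb{F}$ to its positive/negative parts, $\{(W_i,v_i)\}_{i\in I_+}$ is a fusion frame for the Hilbert space $(M_+,[\cdot,\cdot])$ with optimal bounds $A_+$ and $B_+$. By Lemma~\ref{RJPP} the restrictions $\pi_{W_i}|_{M_+}$ coincide with the Hilbert-space orthogonal projections from $M_+$ onto $W_i$, so $S_{{W,v}_+}|_{M_+}$ is exactly the Hilbert-space fusion frame operator attached to this data on $(M_+,[\cdot,\cdot])$. Standard Hilbert-space fusion-frame theory then yields the operator inequality
\begin{equation*}
A_+\cdot I_{M_+}\;\leq\;S_{{W,v}_+}\big|_{M_+}\;\leq\;B_+\cdot I_{M_+}
\end{equation*}
with $A_+$ and $B_+$ optimal, and inversion on $M_+$ gives $\tfrac{1}{B_+}\leq (S_{{W,v}_+}|_{M_+})^{-1}\leq \tfrac{1}{A_+}$ with these reciprocals also optimal.

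Next I would identify this inverse with the positive part of the dual's fusion frame operator restricted to the correct subspace. Since $R(S_{{W,v}_+})=M_+$ and $S_{W,v}^{-1}(M_+)=M_-^{[\perp]}$ (from the previous theorem), the positive part $(S_{W,v}^{-1})_+$ of the dual $J$-fusion frame operator has support subspace $M_-^{[\perp]}$ and satisfies $(S_{W,v}^{-1})_+|_{M_-^{[\perp]}}=(S_{{W,v}_+}|_{M_+})^{-1}$. Unwinding the definition of the $J$-fusion frame bounds for $\{(S_{W,v}^{-1}(W_i),v_i)\}_{i\in I_+}$ on $M_-^{[\perp]}$ therefore produces $\tfrac{1}{B_+}$ and $\tfrac{1}{A_+}$ as the optimal positive bounds of the dual. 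A parallel argument on the negative side, treating $S_{{W,v}_-}|_{M_-}$ as a positive operator on the Hilbert space $(M_-,-[\cdot,\cdot])$ with optimal bounds $|A_-|$ and $|B_-|$, gives the reciprocals $\tfrac{1}{A_-}$ and $\tfrac{1}{B_-}$ as the optimal negative bounds after restoring signs.

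The main obstacle is the bookkeeping between the primal subspaces $M_\pm$ and the dual subspaces $M_\mp^{[\perp]}$: one has to verify carefully that a Hilbert-space spectral inequality living on $M_+$ really does translate into the $J$-fusion frame inequality of the dual on $M_-^{[\perp]}$, since away from uniformly $J$-definite subspaces the Krein and associated Hilbert inner products differ by the Gram operator. Lemma~\ref{RJPP} and Corollary~\ref{RPS} are the tools that make this translation clean, and the oblique decomposition $\mathbb{K}=M_-^{[\perp]}\oplus M_+^{[\perp]}$ used in the preceding theorem guarantees that the two halves glue together to give the optimal bounds claimed. This is also what lets us sidestep the error in the Hilbert-space dual-bound formula identified by Gavruta, since we never attempt a unified spectral estimate on all of $\mathbb{K}$.
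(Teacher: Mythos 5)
Your overall route is the one the paper itself gestures at: the paper gives no proof of this theorem, saying only that Theorem~\ref{BCJFF} can ``easily be generalized'' to the fusion setting, and the first half of your plan (restrict to $(M_+,[\cdot,\cdot])$, use Lemma~\ref{RJPP} to identify $S_{{W,v}_+}|_{M_+}$ with the Hilbert-space fusion frame operator of $\{(W_i,v_i)\}_{i\in I_+}$, obtain $A_+I\leq S_{{W,v}_+}|_{M_+}\leq B_+I$ with optimality, and invert) is sound and matches that plan.

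The gap is in the second half, at the identification $(S_{W,v}^{-1})_+|_{M_-^{[\perp]}}=(S_{{W,v}_+}|_{M_+})^{-1}$ together with the claim that ``unwinding the definition'' then yields the dual's bounds. The dual's positive-side $J$-fusion frame inequality on the Hilbert space $(M_-^{[\perp]},[\cdot,\cdot])$ is governed by the operator $\sum_{i\in I_+}v_i^2P_{S_{W,v}^{-1}(W_i)}$, where $P_{S_{W,v}^{-1}(W_i)}$ is the orthogonal projection of $(M_-^{[\perp]},[\cdot,\cdot])$ onto the \emph{image} subspace $S_{W,v}^{-1}(W_i)$. For ordinary frames the corresponding operator for $\{S^{-1}f_i\}_{i\in I_+}$ is literally $S^{-1}S_+S^{-1}$, which is why the proof of Theorem~\ref{BCJFF} closes; but for fusion frames the projection onto $S_{W,v}^{-1}(W_i)$ does not factor through $S_{W,v}^{-1}\pi_{W_i}$ in general --- this is precisely the discrepancy Gavruta identified in the Hilbert-space theory, and it already occurs \emph{inside} the single definite subspace $M_-^{[\perp]}$, so splitting $\mathbb{K}$ into positive and negative halves does not sidestep it as you claim. (There is also a bookkeeping mismatch: $(S_{{W,v}_+}|_{M_+})^{-1}$ acts on $M_+$ while the dual's positive operator acts on $M_-^{[\perp]}$, and these subspaces coincide only when the decomposition is $J$-orthogonal.) To close the argument you would need to invoke the lemma $Q_VT^{\#}=Q_VT^{\#}Q_{\overline{T(V)}}$ with $T=S_{W,v}^{-1}$ and $V=W_i$ --- which the paper states immediately after this theorem, presumably for this purpose --- or otherwise compute the fusion frame operator of $\{(S_{W,v}^{-1}(W_i),v_i)\}_{i\in I_+}$ explicitly; as written, the reciprocal bounds are asserted rather than derived at exactly the step where the fusion case differs from the frame case.
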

The following lemma is the Krein space version of a theorem in \cite{pg}. We need this result in next part of our work.
\begin{lemma}
Let $T:\mathbb{K}\rightarrow \mathbb{K}$ be any bouned linear operator and $V$ be any closed regular subspace of $\mathbb{K}$. Then $Q_VT^{\#}=Q_VT^{\#}Q_{\overline{T(V)}}$, where $Q_V$ is the $J$-orthogonal projection onto $V$.
\end{lemma}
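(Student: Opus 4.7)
The plan is to reduce the asserted identity $Q_V T^{\#} = Q_V T^{\#} Q_{\overline{T(V)}}$ to the equivalent statement that $Q_V T^{\#}$ annihilates the range of $I - Q_{\overline{T(V)}}$. That is, I would first rewrite the claim as $Q_V T^{\#}(I - Q_{\overline{T(V)}}) = 0$ and then verify the right factor at a time.

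First I would invoke the fact recalled in the preliminaries: since $\overline{T(V)}$ is regular (this is tacitly assumed, as $Q_{\overline{T(V)}}$ already appears in the statement), the complementary operator $I - Q_{\overline{T(V)}}$ is the $J$-projection onto $\overline{T(V)}^{[\perp]}$, so in particular
\begin{equation*}
R(I - Q_{\overline{T(V)}}) = \overline{T(V)}^{[\perp]} = N(Q_{\overline{T(V)}}).
\end{equation*}
Hence for every $x \in \mathbb{K}$ the vector $y := (I - Q_{\overline{T(V)}})(x)$ lies in $\overline{T(V)}^{[\perp]}$.

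Next, I would show $T^{\#}(y) \in V^{[\perp]}$ for every such $y$; once that holds we are done, because $N(Q_V) = V^{[\perp]}$ will force $Q_V T^{\#}(y) = 0$. To check this, pick any $v \in V$ and use the defining property of the $J$-adjoint:
\begin{equation*}
[T^{\#}(y), v] = [y, T(v)].
\end{equation*}
Since $T(v) \in T(V) \subseteq \overline{T(V)}$ while $y \in \overline{T(V)}^{[\perp]}$, the right-hand bracket vanishes. As $v \in V$ was arbitrary, $T^{\#}(y) \in V^{[\perp]}$, which is exactly what is needed.

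The only subtlety is the first step, namely correctly interpreting $I - Q_{\overline{T(V)}}$ as the $J$-projection onto $\overline{T(V)}^{[\perp]}$; this requires the regularity of $\overline{T(V)}$, which is implicit in the hypothesis. After that, the argument reduces to the bare definitions of $J$-adjoint, $J$-orthogonal complement, and $J$-projection, so I do not expect any genuine obstacle beyond bookkeeping.
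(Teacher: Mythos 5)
Your proof is correct and complete. Note that the paper does not actually supply a proof of this lemma at all: it is stated without proof as ``the Krein space version of a theorem in \cite{pg}'' (Gavruta's identity $\pi_V T^{\ast}=\pi_V T^{\ast}\pi_{\overline{T(V)}}$ in Hilbert spaces), so there is nothing to compare your argument against; yours fills the gap with the natural adaptation. The reduction to $Q_VT^{\#}(I-Q_{\overline{T(V)}})=0$, the identification $R(I-Q_{\overline{T(V)}})=\overline{T(V)}^{[\perp]}$ and $N(Q_V)=V^{[\perp]}$ (both recorded in the paper's preliminaries on regular subspaces and $J$-projections), and the computation $[T^{\#}(y),v]=[y,T(v)]=0$ are all sound, and you correctly flag the one genuine hypothesis the statement leaves implicit, namely the regularity of $\overline{T(V)}$ needed for $Q_{\overline{T(V)}}$ to exist.
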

Now we want to address the problem of characterizing those bounded operators $T:\mathbb{K}\rightarrow \mathbb{K}$ such that $\{(T(W_i),v_i):i\in I\}$ is a $J$-fusion frame for $\mathbb{K}$ if $\{(W_i,v_i):i\in I\}$ is a $J$-fusion frame for $\mathbb{K}$. Now to form $J$-fusion frame the subspaces $T(W_i)$ must be uniformly definite. We now provide an example to show that the image of a closed, uniformly definite subspace under a bounded invertible linear operator may be neutral subspace.
\begin{example}
We will define an inner product $[\cdot,\cdot]$ on the sequence space $\ell^2$ in the following way. Let $\{e_n\}_{n\in\mathbb{N}}$ be the countable orthonormal basis. Let $[e_{2n},e_{2n}]=-1,~[e_{2n-1},e_{2n-1}]=1$ for all $n\in\mathbb{N}$ and also $[e_i,e_j]=0$ for $i\neq j$. Let $J:\ell^2\rightarrow\ell^2$ defined by $J(\sum_{n\in\mathbb{N}}c_ne_n)=(\sum_{n\in\mathbb{N}}\sigma_nc_ne_n)$, where $\sum_{n\in\mathbb{N}}c_ne_n\in\ell^2$ and $\sigma_n=1$ if $n$ is odd, $\sigma_n=1$ if $n$ is even. Then the triple $(\ell^2,[\cdot,\cdot],J)$ form a Krein space. Consider the invertible linear operator $T:\ell^2\rightarrow\ell^2$ defined by $T(\{c_n\}_{n\in\mathbb{N}})=(c_1+c_2,c_1+2c_2,c_3,\ldots)$. Now let $M=span\{e_1\}$. Then $M$ is a uniformly $J$-positive definite subspace. But $T(M)=span\{(1,1,0,\ldots)\}$ is a neutral subspace of $\ell^2$.
\end{example}
Now we will consider some restrictions on the linear operator $T$ so that $\{(T(W_i),v_i):i\in I\}$ also a $J$-fusion frame for $\mathbb{K}$. We will also calculate the corresponding $J$-fusion frame bounds.
\begin{definition}
Let $T$ be a bounded linear operator on a Krein space $\mathbb{K}$. $T$ preserves definiteness if $T(V)$ is definite whenever $V$ is definite where $V$ is a subspace of $\mathbb{K}$. We also say $T$ preserves definiteness with sign if the linear operator preserves definiteness and also the sign of the subspaces $V$ and $T(V)$ are same.
\end{definition}
\begin{definition}
Let $T$ be a bounded linear operator on a Krein space $\mathbb{K}$. $T$ preserves maximality if $T(V)$ is also maximal whenever $V$ is a maximal subspace of $\mathbb{K}$.
\end{definition}
\begin{definition}
Let $T$ be a bounded linear operator on a Krein space $\mathbb{K}$. $T$ preserves regularity if $T(V)$ is also regular subspace of $\mathbb{K}$ whenever $V$ is a regular subspace of $\mathbb{K}$.
\end{definition}
\begin{theorem}
Let $T$ be a bounded surjective linear operator on a Krein space $\mathbb{K}$. Also let\\
$(i)$ $T$ preserves definiteness with sign.\\
$(ii)$ $T$ preserves maximality.\\
$(iii)$ $T$ preserves regularity.\\
Then $\{(T(W_i),v_i):i\in I\}$ is a $J$-fusion frame for the Krein space $\mathbb{K}$ if $\{(W_i,v_i):i\in I\}$ be a $J$-fusion frame for $\mathbb{K}$.
\end{theorem}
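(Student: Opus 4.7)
The plan is to verify, in turn, the three ingredients of a $J$-fusion frame for $\{(T(W_i),v_i)\}_{i\in I}$: admissibility of the new subspaces, the Bessel condition, and maximality of the positive and negative parts of the new synthesis range.

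First, I would fix the index sets. Hypothesis $(i)$ forces the partition $I=I_+\cup I_-$ associated with $\{T(W_i)\}$ to coincide with the original one, since $W_i$ positive (resp.\ negative) implies $T(W_i)$ positive (resp.\ negative). Hypothesis $(iii)$ makes each $T(W_i)$ regular, hence closed; and a definite, regular subspace is automatically uniformly definite (its Gram operator is invertible and $\gamma(G_{T(W_i)})>0$). Thus each $T(W_i)$ lies in $\mathbb{P}^{++}\cup\mathbb{P}^{--}$ and admits a bounded $J$-orthogonal projection.

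Next, to establish the Bessel property I would introduce the block-diagonal lift $\hat{T}:\bigl(\sum_i\oplus W_i\bigr)_{\ell_2}\to\bigl(\sum_i\oplus T(W_i)\bigr)_{\ell_2}$, $\hat{T}\{f_i\}=\{Tf_i\}$, which is bounded with $\|\hat{T}\|\le\|T\|$ and satisfies the factorization
\begin{equation*}
T_{TW,v}\circ\hat{T}=T\circ T_{W,v}.
\end{equation*}
Since the right-hand side is bounded, $T_{TW,v}$ is bounded on $R(\hat{T})$. To extend the bound to all of $\bigl(\sum_i\oplus T(W_i)\bigr)_{\ell_2}$, each restriction $T|_{W_i}:W_i\to T(W_i)$ is bounded and surjective between Hilbert spaces (in the $J$-norm) and thus admits a right inverse by open mapping; a uniform norm estimate on these right inverses—available from the uniform angular control that $W_i\subseteq M_\pm$ provides via uniform definiteness of $M_\pm$—delivers the Bessel bound. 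This is the main obstacle: promoting per-coordinate right inverses to an $\ell_2$-bounded section is what actually uses the geometry, not merely boundedness of $T$.

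Finally, for the range conditions, the identity $R(T_{W,v,+})=M_+$ and the factorization above yield
\begin{equation*}
T(M_+)=T\bigl(R(T_{W,v,+})\bigr)=R(T_{TW,v,+}\circ\hat{T}_+)\subseteq R(T_{TW,v,+})\subseteq\overline{\textstyle\sum_{i\in I_+}T(W_i)}.
\end{equation*}
Hypotheses $(i)$ and $(iii)$ make $T(M_+)$ uniformly $J$-positive, and $(ii)$ makes it maximal, hence closed; since $T(M_+)\supseteq\sum_{i\in I_+}T(W_i)$, taking closures forces $\overline{\sum_{i\in I_+}T(W_i)}\subseteq T(M_+)$, so all the inclusions collapse to $R(T_{TW,v,+})=T(M_+)$, a maximal uniformly $J$-positive subspace. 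The parallel argument with ``$+$'' replaced by ``$-$'' handles the negative range and completes the verification of all the $J$-fusion frame axioms for $\{(T(W_i),v_i)\}_{i\in I}$.
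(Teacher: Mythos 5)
Your treatment of the range conditions is essentially the paper's own argument: use (i) to fix the sign partition, note $T(W_i)$ is closed (you get this via regularity, which is actually cleaner than the paper's bare assertion that bounded operators map closed subspaces to closed subspaces), show $T(M_\pm)=\overline{\sum_{i\in I_\pm}T(W_i)}$ using maximality, and invoke regularity to upgrade these maximal definite subspaces to maximal \emph{uniformly} definite ones. That part is sound and matches the paper step for step.

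The divergence is your second paragraph, and that is where a gap remains. You correctly identify that the factorization $T_{TW,v}\circ\hat{T}=T\circ T_{W,v}$ only controls $T_{TW,v}$ on $R(\hat{T})$, and that promoting this to a Bessel bound on all of $\bigl(\sum_i\oplus T(W_i)\bigr)_{\ell_2}$ requires an $\ell_2$-bounded section of $\hat{T}$, i.e.\ $\sup_i\|(T|_{W_i})^{-1}\|<\infty$ (equivalently $\inf_i\gamma(T|_{W_i})>0$). But your claimed source for this uniform estimate --- ``uniform angular control'' coming from the uniform definiteness of $M_\pm$ --- does not deliver it: $\gamma(T|_{W_i})$ depends on how $T$ acts on each $W_i$, not only on the position of $W_i$ inside $M_\pm$, and none of hypotheses (i)--(iii) rules out $\inf_i\gamma(T|_{W_i})=0$ for a surjective $T$ that is not bounded below. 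So the step you yourself flag as ``the main obstacle'' is asserted rather than proved. For what it is worth, the paper does not close this gap either: it simply declares $T\theta$ (with $\theta$ the original synthesis operator) to be ``the'' synthesis operator of the image family and reads off surjectivity, sidestepping the fact that the new synthesis operator is defined on the larger space $\bigl(\sum_i\oplus T(W_i)\bigr)_{\ell_2}$. Your write-up is more honest about where the difficulty lies, but as it stands the Bessel bound for $\{(T(W_i),v_i)\}_{i\in I}$ is not established.
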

\begin{proof}
Let $I_+=\{i\in{I}:[f_i,f_i]\geq 0~\textmd{for all~} f_i\in{W_i}\}$ and $I_-=\{i\in{I}:[f_i,f_i]<0~\textmd{for all~} f_i\in{W_i}\}$. For $i\in{I_+}$ choose $W_i$. Now each $W_i$ is closed, definite subspaces of $\mathbb{K}$. Since $T$ preserves definiteness with sign hence $T(W_i)$ is also positive definite for $i\in{I_+}$. $T(W_i)$ is also closed since the image of closed subspace is also closed as $T$ is bounded and linear. Now $M_+=\overline{\sum_{i\in{I_+}}W_i}$ is maximal uniformly $J$-positive subspace of $\mathbb{K}$. Now $T(M_+)\subset\overline{\sum_{i\in{I_+}}T(W_i)}$. By virtue of our assumptions $\overline{\sum_{i\in{I_+}}T(W_i)}$ is a positive subspace of $\mathbb{K}$. But since $T$ preserves maximality hence $T(M_+)=\overline{\sum_{i\in{I_+}}T(W_i)}$. Similarly for $i\in{I_-}$ we can show that $\overline{\sum_{i\in{I_-}}T(W_i)}=T(M_-)\subset\mathbb{K}$ is a maximal negative subspace of $\mathbb{K}$. Now we will use our regularity assumption. Since $T$ preserves regularity hence $T(M_+)$ and $T(M_-)$ are also regular. Using corollary 7.17 of \cite{ia} we have $T(M_+)$ and $T(M_-)$ are maximal uniformly $J$-positive and $J$-negative subspaces respectively. So we have a decomposition of $\mathbb{K}$ \textit{i.e.} $\mathbb{K}=T(M_+)\oplus T(M_-)$. Now let $\theta$ be the synthesis operator for the Bessel sequence of subspaces $\{W_i,v_i):i\in I\}$. Hence $\theta$ is surjective bounded linear operator. Then the mapping $T\theta$ is well defined and surjective. Now from the definition of $J$-fusion frame it easily follows that $\{(T(W_i),v_i):i\in I\}$ is also a $J$-fusion frame for the Krein space $\mathbb{K}$.
\end{proof}
\begin{remark}
Let the linear operator $T$ considered above is also injective. Then from \cite{jb} we know that $T$ is a scaler multiple of $J$-isometry. Therefore the class of operators are just $J$-unitary operators modulo multiplication by non-zero scalers.
\end{remark}
\begin{remark}
The conditions of the above theorem are sufficient but not necessary. In fact we can get a necessary conditions on $T$ which we thought worth mentioning. 
\end{remark}
\begin{theorem}
Let $\{(W_i,v_i):i\in I\}$ be a $J$-fusion frame for a Krein space $\mathbb{K}$ and $T$ be a bounded surjective linear operator on $\mathbb{K}$ such that $\{(T(W_i),v_i):i\in I\}$ is also a $J$-fusion frame for $\mathbb{K}$. Then $\exists$ index set $I^{0}$ such that $I^{0}=I$ and $\mathbb{K}=\overline{\sum_{i\in{I^0_+}}T(W_i)}\oplus\overline{\sum_{i\in{I^0_-}}T(W_i)}$ where $I^0_+\cup I^0_-=I^{0}$. Also $\overline{\sum_{i\in{I^0_+}}T(W_i)}$ and $\overline{\sum_{i\in{I^0_-}}T(W_i)}$ are maximal uniformly $J$-definite subspaces but of course with opposite signs.
\end{theorem}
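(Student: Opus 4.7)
The plan is to observe that the assertion is essentially a restatement of the definition of a $J$-fusion frame, applied to the image family $\{(T(W_i),v_i)\}_{i\in I}$, once the index set is re-partitioned according to the sign behaviour of the image subspaces rather than that of the original $W_i$. First I would invoke the standing convention in Section~3.2 that every subspace appearing in a $J$-fusion frame lies in $\mathbb{P}^{+}\cup\mathbb{P}^{--}$, whence each $T(W_i)$ is already uniformly $J$-definite.

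Next I would take $I^{0}:=I$ and re-partition it as
\[
I^{0}_{+}:=\{\,i\in I: T(W_i)\in\mathbb{P}^{+}\,\},\qquad I^{0}_{-}:=\{\,i\in I: T(W_i)\in\mathbb{P}^{--}\,\},
\]
emphasising that this is in general \emph{not} the same as the original partition $I=I_+\cup I_-$ attached to $\{(W_i,v_i)\}$, since the hypothesis does not force $T$ to preserve definiteness with sign. Forming the synthesis operator $T_{TW,v}$ of $\{(T(W_i),v_i)\}_{i\in I}$ together with its sign-wise components ${T_{TW,v}}_{\pm}=T_{TW,v}P^{0}_{\pm}$ indexed by the new $I^{0}_{\pm}$, the $J$-fusion frame hypothesis on the image family says precisely that $R({T_{TW,v}}_{+})$ is a maximal uniformly $J$-positive subspace of $\mathbb{K}$ and $R({T_{TW,v}}_{-})$ is a maximal uniformly $J$-negative one.

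Since maximal uniformly definite subspaces are projectively complete, hence closed, by Theorem~2.5, these two ranges coincide respectively with $\overline{\sum_{i\in I^{0}_{+}}T(W_i)}$ and $\overline{\sum_{i\in I^{0}_{-}}T(W_i)}$, which yields the two subspaces with opposite signs claimed in the conclusion. The direct sum decomposition
\[
\mathbb{K}=\overline{\sum\nolimits_{i\in I^{0}_{+}}T(W_i)}\;\oplus\;\overline{\sum\nolimits_{i\in I^{0}_{-}}T(W_i)}
\]
is then exactly the standard Krein-space fact, already invoked immediately after the definition of $J$-fusion frame in Section~3.2, that a pair consisting of a maximal uniformly $J$-positive and a maximal uniformly $J$-negative subspace exhausts $\mathbb{K}$ as an ordinary direct sum.

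The main obstacle, modest as it is, is bookkeeping rather than analysis: one must resist the temptation to carry the original partition $I=I_+\cup I_-$ over to the image family and instead build the new partition $I^{0}_{+}\cup I^{0}_{-}$ dictated by the subspaces $T(W_i)$ themselves. Once this re-indexing is in place the maximality statement and the direct-sum decomposition follow at once from the definition applied to $\{(T(W_i),v_i)\}_{i\in I}$, with no further frame-theoretic estimates required.
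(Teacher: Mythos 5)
Your proposal is correct and is exactly the intended argument: the paper states this theorem without supplying any proof, and the conclusion is indeed just the definition of a $J$-fusion frame unfolded for the image family $\{(T(W_i),v_i)\}_{i\in I}$, together with the observation (made in Section 3.2 right after the definition) that the two maximal uniformly $J$-definite ranges are closed, equal $\overline{\sum_{i\in I^0_{\pm}}T(W_i)}$, and decompose $\mathbb{K}$ as a direct sum. Your emphasis on re-partitioning $I$ by the signs of the $T(W_i)$ rather than of the $W_i$ is the one nontrivial point in the statement, and you handle it correctly.
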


{\bf Acknowledgments: } The author gratefully acknowledge the financial support of CSIR, Govt. of India.

\bibliographystyle{amsplain}

\end{document}